\def \P{\mathcal{P}}
\def \l{\ell}
\def \Psq{\mathcal{P}_{\rm sq}}
\def \Z{\mathbb{Z}}
\def \N{\mathbb{N}}
\def \T{\mathcal{T}}
\def \tmu{\tilde\mu}
\def \({\left( }
\def \){\right)}
\def \t{\tilde}
\theoremstyle{definition}
\newtheorem{defi}{Definition}
\newtheorem{lemma}[defi]{Lemma}
\newtheorem{claim}[defi]{Claim}
\newtheorem{thm}[defi]{Theorem}
\newtheorem{rem}[defi]{Remark}
\newtheorem{example}[defi]{Example}
\title{Partitions, pairs of trees and Catalan numbers}
\author{Eliana Zoque}
\address{University of California, Riverside}\email{elizoque@math.ucr.edu}
\date{\today}
\begin{document}
\begin{abstract}
Bennett et al.\ \cite{BCDM} presented a recursive algorithm to
create a family of partitions from one or several partitions. They
were mainly interested in the cases when we begin with a single
square partition or with several partitions with only one part.
The cardinalities of those families of partitions are the Catalan
and ballot numbers, respectively. In this paper we present a
closed description for those families.  We also present bijections
between those sets of partitions and sets of trees and forests
enumerated by the Catalan an ballot numbers.
\end{abstract}

\maketitle

\section{Introduction}
The Catalan numbers appear in a wide variety of settings,
including representation theory. While studying the category of
finite dimensional representations of the affine Lie algebra
associated to $\mathfrak{sl}_2$ and trying to develop a theory of highest
weight categories, Chari and Greenstein (\cite{CG1,CG2}) found that that
one of the results required for this would be to prove that a
certain module for the ring of symmetric functions is free of rank
equal to the Catalan number. This module is generated by
polynomials that are described using a family of partitions
defined recursively by Bennett et al.\ \cite{BCDM}.

Their algorithm consist of applying two operations to a set of
partitions: the first operation is augmentation which increases
the partition by one part, and to the partitions so obtained we
apply a involution (these operations will be defined in Section
\ref{notation}). If we start with a square partition, i.e.
$\lambda=(\lambda_1,\lambda_2,\dots,\lambda_k)$ where
$k=\lambda_1\geq\lambda_2\geq\dots\geq\lambda_k=1$ then after
applying the algorithm $\l-k$ times we get a set of $c_{\l-k+1}$
partitions with exactly $\l$ parts, where
$c_n=\frac1{n+1}{{2n}\choose{n}}$ is the Catalan number. If we
start with the set of partitions with only one part less than or
equal than $m$, then after $\l-1$ steps we get a set of
$b_{\l,m-1}$ partitions with exactly $\l$ parts, where
$$b_{\ell,m}=\binom{2\ell+m}{\ell}-\binom{2\ell+m}{\ell-1},$$
is called a ballot number.

The main goal of the present paper is to study those families of partitions from a combinatorial point of view. To do that, we first present closed
descriptions for those families of partitions using inequalities, this is done
in Section \ref{ineq}. In Section \ref{bijections} we provide
bijections to families of trees and forests that are enumerated by
the Catalan and ballot numbers, respectively.

\subsection*{Acknowledgements} The author is grateful to M.\ Bennett, V. Chari, R.J.\ Dolbin and N.
Manning for posing the questions and for useful discussions and suggestions.

\section{Notation}\label{notation}

In this section we will present the relevant notation, as well as
some Theorems taken from \cite{BCDM}.

Let $[n]=\{1,\dots, n\}$ for any positive integer $n$. By a
partition $\lambda$ with $n$ parts, we  mean a decreasing sequence
$$\lambda=(\lambda_1,\lambda_2,\cdots,\lambda_n),\quad \lambda_1\geq\lambda_2\geq\dots\geq\lambda_k>0$$ of positive integers.

We denote the set of all partitions by $\P$. Given $\lambda=(\lambda_1,\lambda_2,\cdots,\lambda_n)\in \P$ set
$$\lambda\setminus\{\lambda_n\}=
(\lambda_1,\cdots,\lambda_{n-1}),$$ and for $0 < \lambda_{n+1}
\leq \lambda_n$ set
$$(\lambda:\lambda_{n+1})=(\lambda_1,\lambda_2,\cdots,\lambda_n,\lambda_{n+1}).$$

For $k, n\in\Z^+$, let $\P^{n,k}$ be the set of partitions with
exactly  $n$ parts where no part is bigger than $k$, i.e.
$$\P^{n,k}=\{\lambda=(\lambda_1,\lambda_2,\cdots,\lambda_n)\in\P:\lambda_1\le k\}.$$
We can regard a partition $\lambda\in\P^{n,k}$ as a decreasing
function $\lambda:[n]\to[k],\,i\mapsto \lambda_i$, so we can write
$\lambda(i)=\lambda_i.$

Let $\tau:\P^{n,k}\to\P^{n,k}$ (or $\tau_{k}$, if confusion arises) be
defined by
$$\tau(\lambda_1,\cdots,
\lambda_n)=(k+1-\lambda_n,\cdots, k+1-\lambda_1).$$ As a map
$[n]\to[k]$, $\tau(\lambda)$ is equal to the composition
$\gamma_{k}\circ\lambda\circ\gamma_n$, where for every positive
integer $m$, $\gamma_m:[m]\to [m]$ is a bijection of order two
defined by $i\mapsto m+1-i$.

Set $\P^k=\P^{k,k}$.  Given $\lambda\in\P^k$ and $\l,k\in\Z^+$
with $\l\geq k$, define subsets $\P^\l(\lambda)$ of $\P^\l$
inductively, by
\begin{gather*}\P^k(\lambda)=\{\lambda\}\cup\{\tau_k\(\lambda\)\},\quad
\P^\l(\lambda)=\P^\l_{\rm
d}(\lambda)\cup\P^\l_\tau(\lambda),\end{gather*}
where
\begin{gather*}\P_{\rm d}^\l(\lambda)=\{\mu\in\P^\l:
\mu\setminus\{\mu_\l\}\in\P^{\l-1}(\lambda)\},\quad
\P^\l_\tau(\lambda)=\tau_\l\(\P^\l_{\rm
d}(\lambda)\).\end{gather*} It is easy to see that
\begin{equation*}\label{taustab}\P^\l\(\tau_k\(\lambda\)\)=
\tau_\l\(\P^\l(\lambda)\)=\P^\l(\lambda).\end{equation*}

For $k\in\Z^+$, set \begin{gather*}
\Psq^k=\{\lambda\in\P^k:\lambda_1=k,\ \
\lambda_k=1\}.\end{gather*}

Fix $m\in\Z^+$, and let
$$\Omega_m=\{\{j\}: 1\le j\le m\}\subset \P^{1,m}.$$
Define subsets $\P^{\l}(\Omega_m)$ by\begin{gather*}\P_{\rm
d}^1(\Omega_m)=\Omega_m=\P^1_{\tau}(\Omega_m)=\tau_m\(\Omega_m\),\\
\P^\l_{\rm d}(\Omega_m)=\{(\mu:j)\in\P^{\l,\l+m-1}\,:\,1\leq j\leq
\mu_{\l-1},\,\mu\in\P^{\l-1}(\Omega_m)\},\\
\P^\l_\tau(\Omega_m)=\tau_{\l+m-1}\(\P^\l_{\rm d}(\Omega_m)\),\\
\P^\l(\Omega_m)=\P^\l_{\rm d}(\Omega_m)\cup\P^\l_\tau(\Omega_m).
\end{gather*}

The following are the most important theorems of \cite{BCDM}. We
will present alternative proofs of these theorems in the upcoming
sections.

\begin{thm}[\cite{BCDM}, Section 1.5]\label{thm1} \ \begin{enumerate}
\item[(i)] Let $\l, k\in\Z^+$ be such that $\l\geq k$ and let
$\lambda\in\Psq^k$. Then, $$\#\P^\l(\lambda)=\begin{cases}
c_{\l-k+1},&\lambda=\tau(\lambda),\\2c_{\l-k+1},&\lambda\neq
\tau(\lambda).\end{cases}$$ \item[(ii)] Let $\lambda\in\Psq^k$,
$\nu\in\Psq^s$. For all $\l\in\Z^+$ with $\l\geq\max(k,s)$, we
have
$$\P^\l(\lambda)\cap\P^\l(\nu)=\emptyset,\text{ if }
\nu\notin\{\lambda,\tau_k(\lambda)\}.$$ \item[(iii)] We have
$$\P^\l =\bigcup_{\{\lambda\in\Psq^k\,:\, \l\geq
k\geq1\}}\P^\l(\lambda).$$
\end{enumerate}
\end{thm}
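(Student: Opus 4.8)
The plan is to isolate a single structural statement — a closed, inequality-based description of each $\P^\l(\lambda)$ — from which (i), (ii) and (iii) all follow; part (iii), though, I would dispatch first and independently, since it needs nothing beyond the recursion.

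\emph{Part (iii).} I would induct on $\l$. For $\l=1$, $\P^1=\{(1)\}=\P^1((1))$ with $(1)\in\Psq^1$. For $\l>1$ take $\mu\in\P^\l$. If $\mu_1=\l$ and $\mu_\l=1$ then $\mu\in\Psq^\l$ and $\mu\in\P^\l(\mu)$. Otherwise $\mu_1<\l$ or $\mu_\l>1$, and since $\tau_\l(\P^\l(\lambda))=\P^\l(\lambda)$ we may replace $\mu$ by $\tau_\l(\mu)$ (whose first part is $\l+1-\mu_\l<\l$ when $\mu_\l>1$) and so assume $\mu_1<\l$. Then $\mu\setminus\{\mu_\l\}$ has $\l-1$ parts, all $\le\mu_1\le\l-1$, hence lies in $\P^{\l-1}=\P^{\l-1,\l-1}$; by induction it lies in some $\P^{\l-1}(\lambda)$, so $\mu\in\P^\l_{\rm d}(\lambda)\subseteq\P^\l(\lambda)$. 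The reverse inclusion is trivial. The same bookkeeping records a fact I reuse below: for $\l>k$ one has $\P^\l(\lambda)\cap\Psq^\l=\emptyset$, because $\P^\l_{\rm d}(\lambda)\subseteq\{\mu:\mu_1\le\l-1\}$ while $\P^\l_\tau(\lambda)=\tau_\l(\P^\l_{\rm d}(\lambda))\subseteq\{\mu:\mu_\l\ge2\}$, whereas $\mu\in\Psq^\l$ forces $\mu_1=\l$ and $\mu_\l=1$.

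\emph{The structural result.} For $\lambda\in\Psq^k$ and $k\le\l$ I would prove, by induction on $\l$, a closed description of $\P^\l(\lambda)$ as the set of $\mu\in\P^\l$ satisfying an explicit system of inequalities read off from the parts of $\lambda$; equivalently, $\mu\in\P^\l(\lambda)$ iff $\mu$ can be reduced to $\{\lambda,\tau_k(\lambda)\}$ by a legal sequence of the moves ``delete the last part'' and ``apply $\tau$, then delete the last part''. The inductive step checks that the description is preserved under augmentation $\mu\mapsto(\mu:j)$ and under $\tau_\l$, the base being $\P^k(\lambda)=\{\lambda,\tau_k(\lambda)\}$. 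The step I expect to be the main obstacle is that $\P^\l_{\rm d}(\lambda)$ and $\P^\l_\tau(\lambda)$ genuinely overlap — for instance $(3,3,2,2)\in\P^4_{\rm d}((2,1))\cap\P^4_\tau((2,1))$ — so cardinalities do not merely double at each stage and the description must be tuned so that the overlap is pinned down exactly. Concretely one wants the closure property that if $\zeta\in\P^{\l-1}(\lambda)$ with $\zeta_1\le\l-2$ then $(c,\zeta_1+1,\dots,\zeta_{\l-2}+1)\in\P^{\l-1}(\lambda)$ for every $c$ with $\zeta_1+1\le c\le\l-1$; this is what forces $\P^\l_\tau(\lambda)\subseteq\P^\l_{\rm d}(\lambda)\cup\{\mu:\mu_1=\l\}$, i.e.\ that every $\mu\in\P^\l(\lambda)$ with $\mu_1<\l$ already lies in $\P^\l_{\rm d}(\lambda)$.

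\emph{Parts (ii) and (i).} Granting the structural result, (ii) is immediate: from $\mu$ one reads off $k$ and the pair $\{\lambda,\tau_k(\lambda)\}$ via the inequalities, so the systems attached to $\nu\notin\{\lambda,\tau_k(\lambda)\}$ are incompatible with that of $\lambda$. Alternatively (ii) follows by an induction on $\l$ parallel to (iii): if $\mu\in\P^\l(\lambda)\cap\P^\l(\nu)$ with $\l>\max(k,s)$, then after replacing $\mu$ by $\tau_\l(\mu)$ if needed we have $\mu_1<\l$, hence $\mu\in\P^\l_{\rm d}(\lambda)\cap\P^\l_{\rm d}(\nu)$ by the structural result, so $\mu\setminus\{\mu_\l\}\in\P^{\l-1}(\lambda)\cap\P^{\l-1}(\nu)$ and induction applies; the base $\l=\max(k,s)$ uses $\P^\l(\lambda)\cap\Psq^\l=\emptyset$ for $\l>k$ together with the observation that $\{\lambda,\tau_k(\lambda)\}$ and $\{\nu,\tau_s(\nu)\}$ are orbits of an involution, hence equal or disjoint. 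For (i) I would build an explicit bijection from $\P^\l(\lambda)$ onto a Catalan family of size $c_{\l-k+1}$ — plane binary trees with $\l-k+1$ internal nodes, or the pairs of trees of the title — designed so that one augment-and-involute step corresponds to a single grafting move; the factor $2$ when $\lambda\neq\tau_k(\lambda)$ then comes out because the whole construction is a disjoint union of two isomorphic copies seeded by $\lambda$ and by $\tau_k(\lambda)$, which coincide exactly when $\lambda=\tau_k(\lambda)$. (Self-contained alternative: decompose $\mu\in\P^\l(\lambda)$ at the first index where it enters the ``complementary'' regime and verify the Catalan recursion $c_{n+1}=\sum_{i=0}^{n}c_ic_{n-i}$ directly.)
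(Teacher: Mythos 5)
Your argument for part (iii) is correct and complete: the induction ``if $\mu$ is square it seeds its own family, otherwise apply $\tau_\l$ if necessary so that $\mu_1<\l$ and strip the last part'' works, and it is a genuinely different (and more elementary) route than the paper's, which obtains (iii) from the closed characterization by locating the smallest fixed point $b$ of the increasing map $\mu\circ\mu$ and setting $k=\mu(b)-b+1$ (Lemma \ref{existence}). The side fact you record, $\P^\l(\lambda)\cap\Psq^\l=\emptyset$ for $\l>k$, is also correct.

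For parts (i) and (ii), however, there is a genuine gap: everything is made to rest on a ``structural result'' that you never state, let alone prove. Saying that $\mu\in\P^\l(\lambda)$ iff it can be reduced to $\{\lambda,\tau_k(\lambda)\}$ by deleting last parts and applying $\tau$ is just a restatement of the recursive definition, not a closed description; the actual content needed is an explicit criterion such as the paper's Theorem \ref{sq}, which characterizes $\mu\in\P^\l(\lambda)$ by the existence of $b$ with $\mu(b)=b+k-1$, $\mu(b+k-1)=b$, $\mu(\mu(i))>i$ for $i<b$, $\mu(\mu(i))<i$ for $i>b+k-1$, and the condition that the $k\times k$ core read off at positions $b,\dots,b+k-1$ is $\lambda$ or $\tau_k(\lambda)$; from the uniqueness of $b$ and $k$ (they are the extreme fixed points of $\mu\circ\mu$) part (ii) is then immediate. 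You correctly identify the crux -- the overlap of $\P^\l_{\rm d}(\lambda)$ and $\P^\l_\tau(\lambda)$ and the closure property that every $\mu\in\P^\l(\lambda)$ with $\mu_1<\l$ already lies in $\P^\l_{\rm d}(\lambda)$ -- but you only assert that property; it is exactly the nontrivial step (it does follow from Theorem \ref{sq}, but you give no independent proof), and your alternative induction for (ii) depends on it. Likewise part (i) is only a plan: you do not construct the bijection to a Catalan family, do not verify the Catalan recursion, and the phrase ``disjoint union of two isomorphic copies seeded by $\lambda$ and by $\tau_k(\lambda)$'' needs care, since $\P^\l(\lambda)=\P^\l(\tau_k(\lambda))$ as sets and the recursion itself does not preserve the seed; what actually produces the factor $1$ or $2$ in the paper is the square-removal map $\theta:\P^\l(\lambda)\to\P^{\l-k+1}(1)$, whose fibers have one or two elements according to whether $\lambda=\tau(\lambda)$, combined with the bijection between $\P^\l(1)$ and pairs of rooted trees with $\l-1$ edges giving $\#\P^\l(1)=c_\l$. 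As it stands the proposal proves (iii) but leaves the decisive steps for (i) and (ii) unestablished.
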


\begin{thm}[\cite{BCDM}, Section 3.1]\label{thm2} For $\l,m\in\Z^+$, we have  $$\#\P^\l(\Omega_m)=b_{\l,m-1}.$$
\end{thm}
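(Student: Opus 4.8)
The plan is to deduce Theorem~\ref{thm2} from an explicit description of the sets $\P^\l(\Omega_m)$ by inequalities, and then to reduce the resulting count to the reflection principle. Note first that, by construction, $\P^\l(\Omega_m)=\P^\l_{\rm d}(\Omega_m)\cup\tau_{\l+m-1}(\P^\l_{\rm d}(\Omega_m))$ is stable under the bijection $\tau_{\l+m-1}$ of $\P^{\l,\l+m-1}$, so it suffices to describe the ``half'' $\P^\l_{\rm d}(\Omega_m)$ and then symmetrize.

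First I would prove, by induction on $\l$, that $\P^\l_{\rm d}(\Omega_m)$ is exactly the set of partitions $\lambda=(\lambda_1,\dots,\lambda_\l)$ lying in a certain staircase-shaped region --- cut out by $\lambda_1\le\l+m-2$ together with a further family of linear inequalities on the $\lambda_i$ (equivalently, by an upper bound for each tail sum $\lambda_i+\lambda_{i+1}+\dots+\lambda_\l$) --- and hence that $\P^\l(\Omega_m)$ is this region together with its $\tau_{\l+m-1}$-image. The base case $\l=1$ is $\P^1(\Omega_m)=\{(1),\dots,(m)\}$. For the inductive step one feeds the inductively described region $\P^{\l-1}(\Omega_m)$ into the recursion defining $\P^\l_{\rm d}(\Omega_m)$ --- namely, append to each $\mu\in\P^{\l-1}(\Omega_m)$ a new smallest part $j$ with $1\le j\le\mu_{\l-1}$ --- and then checks that the resulting set, together with its image under $\tau_{\l+m-1}$, is again cut out by inequalities of the same shape; in particular the intersection $\P^\l_{\rm d}(\Omega_m)\cap\P^\l_\tau(\Omega_m)$ is then transparently of the same type.

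Given such a description, $\#\P^\l(\Omega_m)$ becomes a count of lattice points in a staircase domain --- equivalently, of monotone lattice paths from the origin to $(\l+m-1,\l)$ that stay weakly below the diagonal, a partition being encoded by the boundary path of its Young diagram inside an $\l\times(\l+m-1)$ box. One may organize the evaluation via the inclusion--exclusion $\#\P^\l(\Omega_m)=2\,\#\P^\l_{\rm d}(\Omega_m)-\#(\P^\l_{\rm d}(\Omega_m)\cap\P^\l_\tau(\Omega_m))$, which is valid because $\tau_{\l+m-1}$ is a bijection, each of the three terms being an explicit binomial evaluation. In any case, by the reflection principle the count equals $\binom{2\l+m-1}{\l}-\binom{2\l+m-1}{\l-1}=b_{\l,m-1}$, which is the assertion. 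The two-term binomial shape is the fingerprint of the reflection argument, and a cleaner route bypasses the inequalities altogether: one builds a bijection between $\P^\l(\Omega_m)$ and the relevant set of ballot paths directly from the two construction moves, translating ``append a part no larger than the current minimum'' and ``reflect by $\tau_{\l+m-1}$'' into the standard growth and reflection operations on such paths --- the point of view developed in Section~\ref{bijections}.

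The step I expect to be the genuine obstacle is closing the induction in the second paragraph, and the difficulty lies wholly at the boundary of the region. The naive guess that $\P^\l(\Omega_m)$ consists of \emph{all} partitions with $\l$ parts and $\lambda_1\le\l+m-1$ is false, and the correction is not merely a handful of points but a subsystem of inequalities that must be propagated exactly through the recursion: already at $\l=2$ the partition $(m+1,1)$ is excluded, at $\l=3$ the partition $(m+1,1,1)$ together with every partition having $\lambda_1=m+2$ and smallest part $1$, and the discrepancy grows with $\l$. Equivalently, the delicate point is to pin down precisely which $\lambda\in\P^\l_{\rm d}(\Omega_m)$ have $\tau_{\l+m-1}(\lambda)$ again in $\P^\l_{\rm d}(\Omega_m)$, since that intersection is what the count hinges on. If one prefers to argue purely recursively, cardinalities alone will not close the induction: because $\tau_{\l+m-1}$ interchanges the largest and the smallest part, one must carry along a refinement --- such as the joint distribution of $(\lambda_1,\lambda_\l)$, or the polynomial $\sum_{\lambda\in\P^\l(\Omega_m)}q^{\lambda_\l}$ --- and show it obeys a ballot-type recurrence.
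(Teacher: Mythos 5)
Your plan hinges on the claim in your second paragraph that $\P^\l_{\rm d}(\Omega_m)$ is a ``staircase'' region cut out by $\lambda_1\le\l+m-2$ together with linear inequalities on the parts (equivalently, upper bounds on the tail sums), and everything after that (lattice paths weakly below a diagonal, reflection principle) rests on it. That claim is false. Already for $m=1$, $\l=3$: the recursion gives $\P^2(\Omega_1)=\{(1,1),(2,2)\}$, hence $\P^3_{\rm d}(\Omega_1)=\{(1,1,1),(2,2,1),(2,2,2)\}$. The partition $(2,1,1)$ satisfies $\lambda_1\le 2$ and is componentwise smaller than the included $(2,2,1)$, so its tail sums $(4,2,1)$ are bounded by those of $(2,2,1)$; yet $(2,1,1)\notin\P^3_{\rm d}(\Omega_1)$ (indeed $(2,1,1)\notin\P^3(\Omega_1)$ at all). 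Any region defined by an upper bound on $\lambda_1$ plus upper bounds on tail sums is downward closed in the componentwise order, so no such system can describe these sets: they are not ballot-path regions, and the reflection principle does not apply to them directly. The genuine closed description (Theorem \ref{tilde} of the paper) is of a different nature: it constrains the compressed map $\tmu(i)=t(i,\mu(i))$ through conditions on the composite $\tmu(\tmu(i))$ relative to $i$, which is not a system of linear inequalities on the $\lambda_i$.

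The fallbacks you mention do not close this gap: you correctly observe that cardinalities alone will not carry the induction and that the overlap $\P^\l_{\rm d}(\Omega_m)\cap\P^\l_\tau(\Omega_m)$ is the crux, but neither the refined statistic (e.g.\ $\sum_\mu q^{\mu_\l}$ with a ballot-type recurrence) nor the ``direct bijection with ballot paths built from the two construction moves'' is actually constructed, so the proposal stops exactly where the work begins. For comparison, the paper proves Theorem \ref{thm2} without any reflection argument: it first establishes the $\tmu$-characterization (Theorem \ref{tilde}), then builds an explicit bijection between $\P^\l(\Omega_m)$ and ordered forests of $m$ rooted trees with $\l$ edges in total --- the sets $L,M,H$ where $\tmu(i)>i$, $\tmu(i)=i$, $\tmu(i)<i$ are shown to be intervals, each $b\in M$ produces a pair of trees rooted at $b^\pm$, and the quantity $m-\mu(b)+b$ assigns that pair its position among the $m$ slots --- and such forests are counted by the generalized Catalan number $C_{2,m}(\l)=b_{\l,m-1}$. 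To salvage your route you would need either the true (non-staircase) inequality description or the refined recursion carried out in full; as written, the count $\binom{2\l+m-1}{\l}-\binom{2\l+m-1}{\l-1}$ is asserted rather than derived.
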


\section{A closed characterization using inequalities}\label{ineq}

In this section we provide a closed description of the elements of
of $\P^\l(\lambda)$ and $\P^\l(\Omega_m)$ using inequalities.

\subsection{Inequalities for $\boldsymbol{\P^\l(\lambda)}$}

\begin{thm}\label{sq}
Let $\l\geq k$, $\lambda\in\Psq^k$ and $\mu\in\P^\l$. Then $\mu\in\P^\l(\lambda)$ if and only if there
exists $b\in[\l-k+1]$ so that:
\begin{enumerate}
\item[(i)] $\mu(b)=b+k-1,\,\mu(b+k-1)=b$. \item[(ii)]
 If $1\leq i< b$ then $\mu(\mu(i))>i$. \item[(iii)] If
$b+k-1< i\leq \l$ then $\mu(\mu(i))<i$.
\item[(iv)] If $\t\lambda\in\Psq^k$ is defined by
$\t\lambda(i)=\mu(b+i-1)-b+1,\,1\leq i\leq k$ then $\t\lambda\in\{\lambda,\tau(\lambda)\}.$
\end{enumerate}
\end{thm}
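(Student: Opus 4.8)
The plan is to prove both directions by induction on $\l\geq k$, following the recursive structure of the definition of $\P^\l(\lambda)$. The base case $\l=k$ is a direct check: the only admissible value is $b=1$, conditions (ii) and (iii) are vacuous, condition (i) says $\mu(1)=k,\ \mu(k)=1$ (which holds for every element of $\Psq^k$, and in particular for $\lambda$ and $\tau(\lambda)$), and condition (iv) with $b=1$ reads $\t\lambda=\mu$, so the set of $\mu$ satisfying the conditions is exactly $\{\lambda,\tau(\lambda)\}=\P^k(\lambda)$.

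For the inductive step I would first record the behavior of conditions (i)--(iv) under the two operations used to build $\P^\l(\lambda)$ from $\P^{\l-1}(\lambda)$, namely augmentation $\mu\mapsto(\mu:\mu_\l)$ and the involution $\tau_\l$. The key observation is that an element $\mu\in\P^\l$ with $\mu\setminus\{\mu_\l\}\in\P^{\l-1}(\lambda)$ should satisfy (i)--(iv) with the \emph{same} witness $b\in[\l-1-k+1]\subset[\l-k+1]$: adding the $\l$-th part $\mu_\l=\mu(\l)$ at the bottom can only affect (iii) at $i=\l$, and one checks that $\mu(\mu(\l))<\l$ automatically, since $\mu(\l)<\l$ (because $\mu\in\P^\l_{\rm d}(\lambda)$ forces $\mu_\l\le\mu_{\l-1}\le k-1+(\l-1-k+1)=\l-1$ from the inductive description), and $\mu$ is decreasing. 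Conditions (i), (ii), (iv) refer only to indices $\le\l-1$ and the sub-partition $\t\lambda$, so they carry over verbatim. Dually, applying $\tau_\l$ sends a $\mu$ with witness $b$ to a $\mu'=\tau_\l(\mu)$ with witness $b'=\l-k+2-b$; here one uses the conjugation identity $\tau_\l(\mu)=\gamma_\l\circ\mu\circ\gamma_\l$ to see that $\mu'(\mu'(i))=\l+1-\mu(\mu(\l+1-i))$, which swaps the roles of (ii) and (iii) and reflects $b\leftrightarrow b'$, while (i) and (iv) are preserved because $\tau$ fixes $\{\lambda,\tau(\lambda)\}$ (using $\tau_k(\t\lambda)$ is computed from $\mu'$ exactly as $\t\lambda$ from $\mu$).

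With these two lemmas in hand, the forward inclusion $\P^\l(\lambda)\subseteq\{\mu:\text{(i)--(iv)}\}$ is immediate: any $\mu\in\P^\l(\lambda)$ is obtained from some $\nu\in\P^{\l-1}(\lambda)$ — which satisfies (i)--(iv) by induction — by augmentation, possibly followed by $\tau_\l$, and the two lemmas show the conditions are inherited (with a relabeled witness $b$). For the reverse inclusion, take $\mu$ satisfying (i)--(iv) with witness $b$. If $b\le\l-k$, I claim $\mu\setminus\{\mu_\l\}$ satisfies (i)--(iv) at level $\l-1$ with the same $b$: one must verify that deleting the last part does not destroy (iii), i.e.\ that $\mu(\mu(i))<i$ still holds for $b+k-1<i\le\l-1$ when $\mu$ is truncated, which follows because truncation does not change $\mu(i)$ or $\mu(\mu(i))$ for $i\le\l-1$ as long as $\mu(i)\le\l-1$, and (iii) at $i=\l$ in the original $\mu$ combined with monotonicity forces $\mu(\l-1)\le\l-1$. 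Hence by induction $\mu\setminus\{\mu_\l\}\in\P^{\l-1}(\lambda)$, so $\mu\in\P^\l_{\rm d}(\lambda)\subseteq\P^\l(\lambda)$. If instead $b=\l-k+1$, condition (iii) is vacuous and (ii) says $\mu(\mu(i))>i$ for all $i<b=\l-k+1$; then $\tau_\l(\mu)$ has witness $b'=1$, so by the $b\le\l-k$ case applied to $\tau_\l(\mu)$ (valid once $\l>k$, since $b'=1\le\l-k$) we get $\tau_\l(\mu)\in\P^\l(\lambda)$, whence $\mu\in\tau_\l(\P^\l(\lambda))=\P^\l(\lambda)$.

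The main obstacle I anticipate is the bookkeeping in the truncation argument for condition (iii): one needs to be careful that when $\mu\in\P^\l$ satisfies (i)--(iv), the part $\mu_\l$ really is $\le\l-1$ (so that truncating is legitimate and does not interfere with the values $\mu(\mu(i))$ for smaller $i$), and more subtly that for every $i$ with $b+k-1<i\le\l-1$ we have $\mu(i)\le\l-1$ as well — otherwise $\mu(\mu(i))$ would reference the deleted part. This should follow from a monotonicity/pigeonhole argument: if $\mu(i)=\l$ for some $i<\l$ then $\mu(i)\ge i+1$ with $\mu$ decreasing forces $i=1$ and $\mu(1)=\l$, but then (i) gives $\mu(b)=b+k-1\le\l-1<\l=\mu(1)$ with $b\ge1$, contradicting monotonicity unless $b=1$, and if $b=1$ then $k-1=\mu(1)-\text{(adjustments)}$... so a short case analysis pins it down. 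I would isolate this as a preliminary lemma ("if $\mu$ satisfies (i)--(iii) with witness $b$ then $\mu(i)\le b+k-1$ for $i\ge b$ and $\mu(i)\ge b$ for $i\le b+k-1$", i.e.\ the block structure) before running the induction, since the same block-structure fact is exactly what makes $\t\lambda$ in (iv) well-defined as an element of $\Psq^k$.
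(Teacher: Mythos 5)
Your base case, your $\tau$-compatibility lemma, and your forward direction all match the paper's argument (the paper's Lemma \ref{ii-iii} is exactly your statement that $\tau_\l$ exchanges (ii) and (iii) and reflects $b\mapsto\l-k-b+2$ while (iv) is preserved up to $\tau_k$). The problem is in your converse direction: you organize the reduction by the position of the witness, claiming that if $b\le\l-k$ then $\nu=\mu\setminus\{\mu_\l\}$ satisfies (i)--(iv) at level $\l-1$ with the same $b$. This is false. Take $k=1$, $\lambda=(1)$, $\l=3$, $\mu=(3,2,2)$: here $b=2\le\l-k$, conditions (i)--(iv) hold, and indeed $\mu=\tau_3(2,2,1)\in\P^3(1)$, but truncation gives $(3,2)$, which is not even an element of $\P^2=\P^{2,2}$, so the induction hypothesis cannot be applied. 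The obstruction is not the one you flag at the end (an index $i>b+k-1$ with $\mu(i)=\l$); it is simply that $\mu(1)=\l$ is possible with $1<b\le\l-k$, and then deleting the last part does not produce a partition in $\P^{\l-1}$ at all. Your attempted pigeonhole repair does not close this: $\mu(1)=\l>\mu(b)=b+k-1$ is perfectly compatible with monotonicity and with (i)--(iv) whenever $b>1$ (condition (ii) at $i=1$ only forces $\mu(\l)>1$), so no contradiction arises, and your proposed block-structure lemma bounds $\mu(i)$ only for $i\ge b$, leaving exactly the dangerous indices $i<b$ uncontrolled.

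The correct dichotomy --- the one the paper uses --- is on the value $\mu(1)$, not on $b$. First rule out $\mu(1)=\l$ and $\mu(\l)=1$ simultaneously when $\l>k$: that would make $1$ and $\l$ fixed points of $\mu\circ\mu$, forcing $b=1$ and $b+k-1=\l$, i.e.\ $k=\l$. If $\mu(1)<\l$, then every part of $\mu$ is at most $\l-1$, automatically $b+k-1\le\mu(1)\le\l-1$ so the witness fits at level $\l-1$, and truncation plus the induction hypothesis gives $\mu\in\P^\l_{\rm d}(\lambda)$. If $\mu(1)=\l$ and $\mu(\l)>1$, pass to $\mu'=\tau_\l(\mu)$, which by your own lemma satisfies (i)--(iv) with witness $\l-k-b+2$ and has $\mu'(1)=\l+1-\mu(\l)<\l$, so the previous case applies to $\mu'$ and $\mu=\tau_\l(\mu')\in\P^\l_\tau(\lambda)$. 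With this replacement of your case split the rest of your outline goes through and coincides with the paper's proof.
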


\begin{rem}
It is clear that, given $\mu\in\P^\l$, if there exist $b$ and $k$
satisfying conditions (i)-(iii) they are unique since $b$ and
$b+k-1$ are the smallest and largest fixed points of
$\mu\circ\mu:[\l]\to[\l]$. We will prove in Lemma \ref{existence}
that they do exist for every $\mu\in\P^\l$.

Also, notice that Theorem \ref{thm1}(ii) is a direct consequence
of condition (iv) in Theorem \ref{sq}.
\end{rem}

Before proving Theorem \ref{sq} we prove the following
\begin{lemma}\label{ii-iii}\ \begin{enumerate}\item
$\mu\in\P^\l$ satisfies (i), (ii) and (iii) for $b$ if and only if
$\tau_\l(\mu)$ satisfies (i), (ii) and (iii) for $b'=\l-k-b+2.$
\item If  $\t\lambda\in\Psq^k$ is defined as in (iv), i.e.,
$$\t\lambda(i)=\mu(b+i-1)-b+1,\,1\leq i\leq k$$ and $\t{\lambda'}\in\Psq^k$ is defined similarly for $\tau_\l(\mu)$ and $b'$,
i.e., $$\t{\lambda'}(i)=\tau_\l(\mu)(b'+i-1)-b'+1,\,1\leq i\leq k$$
then $\tau_k\big(\t\lambda\big)=\t{\lambda'}$.
\end{enumerate}
\end{lemma}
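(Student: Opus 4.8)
The plan is to work directly from the definitions of $\tau_\l$ and $\tau_k$ and push the symmetry $\gamma$ through all the conditions.

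First I would set up the bookkeeping carefully. Recall $\tau_\l(\mu)(j) = \l+1-\mu(\l+1-j)$ for $j\in[\l]$, and that, as a permutation-like identity, $\tau_\l(\mu)\circ\tau_\l(\mu) = \gamma_\l\circ(\mu\circ\mu)\circ\gamma_\l$ where $\gamma_\l(j)=\l+1-j$. The key observation for part (1) is that $j$ is a fixed point of $\mu\circ\mu$ if and only if $\gamma_\l(j)=\l+1-j$ is a fixed point of $\tau_\l(\mu)\circ\tau_\l(\mu)$; moreover $\mu\circ\mu(j)>j \lr \tau_\l(\mu)\circ\tau_\l(\mu)(\l+1-j) < \l+1-j$, because $\gamma_\l$ is order-reversing. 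So the set of indices where (ii)-type strict inequalities hold for $\mu$ is carried by $j\mapsto\l+1-j$ onto the set where (iii)-type inequalities hold for $\tau_\l(\mu)$, and vice versa.

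Now I would verify condition (i). If $\mu$ satisfies (i) for $b$, i.e. $\mu(b)=b+k-1$ and $\mu(b+k-1)=b$, I want $\tau_\l(\mu)(b')=b'+k-1$ and $\tau_\l(\mu)(b'+k-1)=b'$ for $b'=\l-k-b+2$. Compute $\tau_\l(\mu)(b') = \l+1-\mu(\l+1-b') = \l+1-\mu(b+k-1) = \l+1-b = b'+k-1$ since $b'+k-1 = \l-b+1$; similarly $\tau_\l(\mu)(b'+k-1)=\tau_\l(\mu)(\l-b+1) = \l+1-\mu(b) = \l+1-(b+k-1) = b'$. Also $b\in[\l-k+1]$ iff $b'\in[\l-k+1]$, so the range condition is symmetric. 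Combined with the previous paragraph, this gives the full equivalence in (1) (the converse direction follows because $\tau_\l$ is an involution, $b\mapsto b'$ is an involution, and one can just apply the forward direction to $\tau_\l(\mu)$).

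For part (2) I would again just substitute. By definition $\t\lambda(i) = \mu(b+i-1)-b+1$ and $\t{\lambda'}(i) = \tau_\l(\mu)(b'+i-1)-b'+1$. Using $\tau_\l(\mu)(j)=\l+1-\mu(\l+1-j)$ with $j=b'+i-1$, note $\l+1-j = \l+1-b'-i+1 = b+k-i$ (using $b'=\l-k-b+2$), so $\t{\lambda'}(i) = \l+1-\mu(b+k-i)-b'+1 = \l+2-b'-\mu(b+k-i) = k+b-\mu(b+k-i)$. On the other hand $\tau_k(\t\lambda)(i) = k+1-\t\lambda(k+1-i) = k+1-\big(\mu(b+(k+1-i)-1)-b+1\big) = k+b-\mu(b+k-i)$. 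These agree, which is exactly $\tau_k(\t\lambda)=\t{\lambda'}$.

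I do not expect a genuine obstacle here: the whole statement is a compatibility check asserting that the two involutions $\tau_\l$ on $\P^\l$ and $\tau_k$ on $\P^k$ intertwine correctly under the "window extraction" map $\mu\mapsto\t\lambda$. The only place to be careful is the index arithmetic — getting $b' = \l-k-b+2$ right and tracking that $\l+1-(b'+i-1) = b+k-i$ — and checking that $\t\lambda$ as defined genuinely lands in $\Psq^k$ (which uses (i): $\t\lambda(1)=\mu(b)-b+1=k$ and $\t\lambda(k)=\mu(b+k-1)-b+1=1$), so that applying $\tau_k$ makes sense. Everything else is a one-line substitution.
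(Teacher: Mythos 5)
Your proposal is correct and follows essentially the same route as the paper: you conjugate by the order-reversing map $\gamma_\l$ to transfer condition (i) and to exchange the (ii)- and (iii)-type inequalities (using $\tau_\l(\mu)\circ\tau_\l(\mu)=\gamma_\l\circ(\mu\circ\mu)\circ\gamma_\l$ and that $\tau_\l$ and $b\mapsto b'$ are involutions), and you prove (2) by the same index substitution, your identity $\tau_k(\t\lambda)(i)=k+b-\mu(b+k-i)=\t{\lambda'}(i)$ being exactly the paper's check that $\t{\lambda'}(i)+\t\lambda(k+1-i)=k+1$.
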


\begin{proof}
Let $\mu'=\tau_\l(\mu)$. Then
$$\mu'(t)=\gamma(\mu(\gamma(t)))$$
where $\gamma=\gamma_\l$, i.e., $\gamma:\Z\to\Z,\,x\mapsto \l+1-x$. $\gamma$ is an
order-reversing bijection satisfying 
$\gamma(b')=b+k-1$ and $\gamma(b)=b'+k-1$.

If $\mu$ satisfies (i), then
$\mu'(b')=\gamma(\mu(\gamma(b')))=\gamma(\mu(b+k-1))=\gamma(b)=b'+k-1$
and similarly, $\mu'(b'+k-1)=b'.$ This is condition (i) for
$\mu'.$

Assume that $\mu$ satisfies (ii) for $b$, that is,
$$1\leq i<b\Rightarrow i<\mu(\mu(i)).$$
Therefore
$$\gamma(b)< \gamma(i)\leq\gamma(1)\Rightarrow \gamma(i)>\gamma(\mu(\mu(i))),$$
or equivalently,
$$b'+k-1< \gamma(i)\leq\l\Rightarrow \gamma(i)>\gamma(\mu(\mu(i))).$$
If we set $i'=\gamma(i)$ then $i=\gamma(i')$ and
$$b'+k-1< i'\leq\l\Rightarrow i'>\gamma(\mu(\mu(\gamma(i')))).$$
This is precisely (iii) for $\mu',\,b',\,i'$ since
$$\mu'(\mu'(i'))=\mu'(\mu'(\gamma(i)))=\gamma(\mu(\gamma(\mu'(\gamma(i)))))
=\gamma(\mu(\mu(\gamma(i')))).$$
The other implications in (a) are similar.

The proof of (b) is a straight forward calculation. It suffices to
prove that $\t{\lambda'}(i)+\t\lambda(k+1-i)=k+1,\,1\leq i\leq k$.
In fact:
$$\t{\lambda'}(i)+\t\lambda(k+1-i)=(\tau_\l(\mu)(b'+i-1)-b'+1)+(\mu(b+(k+1-i)-1)-b+1)$$
$$=\gamma(\mu(\gamma(b'+i-1)))+\mu(b+k-i)-b-b'+2$$
$$=\gamma(\mu(b+k-i))+\mu(b+k-i)+k-\l=\l+1-\mu(b+k-i)+\mu(b+k-i)+k-\l=k+1.$$
\end{proof}

\begin{proof}[Proof of Theorem \ref{sq}] First we prove that $\mu\in\P^\l(\lambda)$ implies (i)-(iv).
We proceed by induction on $\l$. If $\l=k$ then  $b=1$, conditions
(ii) and (iii) are vacuum, and $\mu\in\P^\l(\lambda)$ if and only
if $\mu\in\{\lambda,\tau_\l(\lambda)\}$, which is precisely
condition (iv) since $\lambda'=\mu$.

For the inductive step assume that $\l>k$ and $\mu\in\P^\l(\lambda)$, say
$$\mu\in \{(\nu:j)\,|\,1\leq j\leq \nu(\l-1)\}\cup\{\tau_{\l}(\nu:j)\,|\,1\leq j\leq \nu(\l-1)\}$$
where $\nu\in\P^{\l-1}(\lambda)$ satisfies conditions (i)-(iv) for
$b'$, $1\leq b'\leq \l-k$. If $\mu=(\nu:j)$, then take $b=b'$. The
only thing that we have to check is (iii) for $i=\l$ which in this
case reads $\mu(\mu(\l))<\l$, but every part of $\nu$, and
therefore of $\mu$, is less than $\l$.

If $\mu=\tau_{\l}(\nu:j)$ then it follows from Lemma \ref{ii-iii}
and the previous paragraph that $\mu$ satisfies (i)-(iv) for
$b=\l-k-b'+2.$

Now we prove that (i)-(iv) imply that $\mu\in\P^\l(\lambda)$. If
$\l=k$ then we must have $b=1$,(i) means that $\mu\in\Psq^k$, (ii)
and (iii) are vacuum, and (iv) means that $\mu\in\{\lambda,
\tau_\l(\lambda)\}$.

If $\l>k$ then $\mu$ is not a square partition since $\mu(\l)=1$
and $\mu(1)=\l$ imply $\mu(\mu(\l))=\l$ and $\mu(\mu(1))=1$,
forcing $b=1$ and $b+k-1=\l$ and therefore
$\l-1=k-1$, a contradiction. So we consider the cases
$\mu(\l)\neq1$ and $\mu(1)\neq\l$.

If $\mu(1)<\l$ then $\mu=(\nu:\mu(\l))$ where by the induction
hypothesis $\nu\in\P^{\l-1}(\lambda)$ and therefore
$\mu\in\P^{\l}(\lambda).$

If $\mu(1)=\l$ and $\mu(\l)>1$ then $\mu'=\tau_\l(\mu)$ satisfies
$\mu'(\l)=1,\,\mu'(1)<\l$, 
and by Lemma \ref{ii-iii}, $\mu'$ satisfies (i)-(iv) for
$b'=\l-k-b+2$. So we can apply the previous paragraph to conclude
that $\mu'\in\P^\l(\lambda)$ and therefore $\mu\in\P^\l(\lambda)$.
\end{proof}

Now we use these results to provide an alternative proof of the following

\begin{lemma}[Theorem \ref{thm1}(iii), \cite{BCDM} Proposition 2.6 (ii)]\label{existence}
Let $\mu\in\P^\l$ for some $\l\in\N$. Then $\mu\in\P^\l(\lambda)$
for some $\lambda\in\Psq^k,\, k\geq1$.
\end{lemma}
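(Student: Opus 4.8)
The plan is to use the characterization from Theorem~\ref{sq} together with the involution-symmetry recorded in Lemma~\ref{ii-iii}. Given $\mu\in\P^\l$, I first want to produce the candidate values of $b$ and $k$. As the remark after Theorem~\ref{sq} already observes, if $\mu$ is going to satisfy conditions (i)--(iii) then $b$ and $b+k-1$ must be the smallest and largest fixed points of the map $\mu\circ\mu\colon[\l]\to[\l]$, so the real content of the lemma is that $\mu\circ\mu$ \emph{has} fixed points and that, with $b$ and $k$ read off from them, conditions (i)--(iv) actually hold for some $\lambda\in\Psq^k$. Condition (iv) is then essentially automatic: whatever $\t\lambda$ comes out of the prescription in (iv), one only needs to check that it lies in $\Psq^k$ and then \emph{define} $\lambda:=\t\lambda$, so the whole weight of the proof is on (i)--(iii), i.e.\ on the purely combinatorial fact that every decreasing $\mu\colon[\l]\to[\l]$ satisfies them for a suitable choice of $b,k$.

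The main step is therefore: \textbf{every $\mu\in\P^\l$ satisfies (i)--(iii) for some $b\in[\l]$ and some $k$ with $b+k-1\le\l$.} I would prove this by induction on $\l$, mirroring the structure of the proof of Theorem~\ref{sq}. For $\l=1$ the statement is trivial with $b=k=1$. For $\l>1$, write $\mu=(\nu:\mu(\l))$ where $\nu=\mu\setminus\{\mu_\l\}\in\P^{\l-1}$. Apply the induction hypothesis to $\nu$ to get $b',k'$ and hence $\nu\in\P^{\l-1}(\lambda')$ for some $\lambda'\in\Psq^{k'}$ via Theorem~\ref{sq}. Now case-split exactly as in the last part of the proof of Theorem~\ref{sq}: if $\mu(1)<\l$, then appending $\mu(\l)$ as a last part lands inside $\P^\l_{\rm d}(\lambda')$, so $\mu\in\P^\l(\lambda')$; if $\mu(1)=\l$, replace $\mu$ by $\tau_\l(\mu)$, which satisfies $\tau_\l(\mu)(1)<\l$, handle that case, and then use the $\tau$-stability $\P^\l(\tau_{k'}(\lambda'))=\P^\l(\lambda')$ (equivalently Lemma~\ref{ii-iii}) to transport the conclusion back to $\mu$. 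The one remaining sub-case, $\mu(1)=\l$ \emph{and} $\mu(\l)=1$ simultaneously, forces $\mu$ itself to be a square partition in $\Psq^\l$, which is the base $\l=k$ of Theorem~\ref{sq} and gives $\mu\in\P^\l(\mu)$ directly.

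The expected obstacle is bookkeeping rather than a genuine difficulty: one must be careful that in the case $\mu(1)<\l$ the truncation $\nu$ still has the right shape, namely that $\mu(\l)\le\nu(\l-1)$ (immediate since $\mu$ is decreasing) so that $(\nu:\mu(\l))$ is a legitimate augmentation, and that the inductive $b',k'$ survive unchanged — precisely the verification ``(iii) for $i=\l$ reads $\mu(\mu(\l))<\l$, true because every part is $<\l$'' already done inside the proof of Theorem~\ref{sq}. In fact, once Theorem~\ref{sq} is in hand the cleanest route is simply to observe that the above case analysis is \emph{literally} the proof that ``(i)--(iv) imply $\mu\in\P^\l(\lambda)$'' run in reverse: we are just noting that every $\mu$ can be fed into that argument because the augmentation/involution decomposition of $\mu$ is unconditional. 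So I would phrase the proof as a short induction producing $k$ and $\lambda\in\Psq^k$ with $\mu\in\P^\l(\lambda)$, invoking Theorem~\ref{sq} (or directly the recursive definition of $\P^\l(\lambda)$) at each step, and citing Lemma~\ref{ii-iii} for the $\mu(1)=\l$ reduction.
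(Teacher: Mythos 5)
Your proof is correct, but it follows a genuinely different route from the paper's. The paper's proof of Lemma \ref{existence} is direct and non-inductive: since $\mu$ is decreasing, $\mu\circ\mu\colon[\l]\to[\l]$ is increasing, so the sequence $j_1=1,\ j_{m+1}=\mu(\mu(j_m))$ stabilizes and yields a fixed point; taking $b$ to be the smallest fixed point, one checks that $\mu(b)$ is the largest one, sets $k=\mu(b)-b+1$, verifies conditions (i)--(iii) of Theorem \ref{sq} from the extremality of $b$ and $\mu(b)$, defines $\lambda$ by (iv), and concludes by the sufficiency direction of Theorem \ref{sq}. You instead induct on $\l$ through the recursive definition of $\P^\l(\lambda)$: strip the last part when $\mu(1)<\l$, pass to $\tau_\l(\mu)$ and invoke the $\tau$-stability $\P^\l(\tau_k(\lambda))=\tau_\l(\P^\l(\lambda))=\P^\l(\lambda)$ when $\mu(1)=\l$ and $\mu(\l)>1$, and note that $\mu(1)=\l,\ \mu(\l)=1$ forces $\mu\in\Psq^\l$, whence $\mu\in\P^\l(\mu)$. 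This case analysis is exhaustive and each step is licensed by the definitions, so your argument stands; it is in fact more elementary, since it never really needs Theorem \ref{sq} or the inequality characterization (your opening discussion of producing $b,k$ and checking (i)--(iv) is dispensable --- what the induction actually proves is the membership $\mu\in\P^\l(\lambda)$ directly). What the paper's route buys instead is the explicit, non-recursive identification of the parameters --- $b$ and $b+k-1$ are precisely the smallest and largest fixed points of $\mu\circ\mu$ --- which is the constructive payoff of the inequality description this section is meant to showcase. Two small points to tidy in your write-up: state the induction as proving the membership claim rather than conditions (i)--(iii); and split off the square sub-case $\mu(1)=\l,\ \mu(\l)=1$ before asserting $\tau_\l(\mu)(1)<\l$, since that inequality holds only when $\mu(\l)>1$.
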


\begin{proof}
We are to prove that for every $\mu\in\P^\l$ there exist $b$ and
$k$ satisfying the conditions in Theorem \ref{sq}.
$\mu\circ\mu:[\l]\to[\l]$ is increasing since $\mu$ is decreasing.
It has a fixed point since the sequence $j_1,\,j_2\dots$ defined
by $j_1=1,\,j_{m+1}=\mu(\mu(j_m))$ is increasing and must
stabilize. Let $b$ be the smallest fixed point of $\mu\circ\mu$.
Clearly $\mu(b)$ is another fixed point of $\mu\circ\mu$. We claim
that it is the largest one. Assume that $a>\mu(b)$ is another
fixed point for $\mu\circ\mu$, then so is
$\mu(a)\leq\mu(\mu(b))=b$. The minimality of $b$ implies
$\mu(a)=b$, but this is impossible since $\mu(\mu(a))=a>\mu(b).$

Set $k=\mu(b)-b+1$, so $\mu(b)=b+k-1$ and condition (i) is
satisfied. If $1\leq t< b$ then $\mu(\mu(t))\leq\mu(\mu(b))=b$,
but the inequality cannot occur since $b$ is the smallest fixed
point of $\mu\circ\mu$. This proves condition (ii), and (iii) is
proved similarly. Defining $\lambda\in\Psq^k$ as in condition (iv) we get that $\mu\in\P^\l(\lambda)$
\end{proof}

\subsection{Inequalities for $\boldsymbol{\P^\l(\Omega_m)}$}\

As before, we regard elements of  $\P^\l(\Omega_m)$ as decreasing
functions $\mu:[\l]\to[\l+m-1]$. Here $\tau=\tau_{m+\l-1}$ so
$\tau(\mu)=\gamma_{m+\l-1}\circ\mu\circ\gamma_\l$.

We define $$t:[\l]\times[\l+m-1]\to [\l],\, t(r,s)=\begin{cases}
s&\text{if }s<r\\r&\text{if }r\leq s\leq m+r-1\\s-m+1&\text{if
}s>m+r-1
\end{cases}$$
$t$ has the effect of ``compress" a $\l\times(\l+m-1)$ rectangle
into a $\l\times\l$ square. For example, if $\l=5,\,m=3$, the
values of $t(r,s)$ are shown in Figure \ref{t}, where $r$ and $s$
are displayed vertical and horizontally, respectively.

\begin{figure}[h]
\begin{center}\setlength{\unitlength}{0.5mm}\begin{picture}(70,70)
{\color{gray}
\multiput(0,5)(0,10){6}{\line(1,0){70}}
\multiput(0,5)(10,0){8}{\line(0,1){50}}}
\multiput(3,7)(0,10){5}{1}
\multiput(13,7)(0,10){3}{2}
\multiput(23,7)(0,10){2}{3}
\put(33,7){4}

\multiput(63,7)(0,10){5}{5}
\multiput(53,17)(0,10){4}{4}
\multiput(43,27)(0,10){3}{3}
\multiput(33,37)(0,10){2}{2}

\multiput(3,47)(10,0){3}{1}
\multiput(13,37)(10,0){3}{2}
\multiput(23,27)(10,0){3}{3}
\multiput(33,17)(10,0){3}{4}
\multiput(43,7)(10,0){3}{5}

\put(-5,55){\vector(0,-1){50}} \put(-10,27){$r$}
\put(0,60){\vector(1,0){70}} \put(35,62){$s$}

\thicklines
\put(0,45){\framebox(30,10)}\put(10,35){\framebox(30,10)}
\put(20,25){\framebox(30,10)}\put(30,15){\framebox(30,10)}\put(40,5){\framebox(30,10)}
\end{picture}
\end{center}\caption{$t:[5]\times[7]\to[5]$}\label{t}
\end{figure}

Let $\tilde\mu(i)=t(i,\mu(i))$. In general,
$\tilde\mu:[\l]\to[\l]$ is not a partition since it is not
decreasing.
The following properties are easy to prove

\begin{lemma}\label{propt}
\
\begin{enumerate}
\item[(a)] $t(r,s)<r$ if and only if $s<r$, and $t(r,s)>r$ if and only
if $s>r+m-1$. \item[(b)]
$t\circ(\gamma_\l\times\gamma_{\l+m-1})=\gamma_\l\circ\mu.$
\item[(c)] Let $\mu'=\tau(\mu)$. Then
$\gamma_\l\circ\widetilde{\mu'}=\t\mu\circ\gamma_{\l}$.
\end{enumerate}
\end{lemma}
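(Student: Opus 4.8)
The plan is to prove each of the three statements in Lemma \ref{propt} by direct unwinding of the definition of $t$, the reflection maps $\gamma_m$, and $\tilde\mu$. None of these should require induction or any of the deeper machinery; the work is entirely bookkeeping with the piecewise definition of $t$ and with the identities $\gamma_m\circ\gamma_m=\mathrm{id}$ and $\mu(i)\leq m+i-1$ (which holds because $\mu:[\l]\to[\l+m-1]$ arises, via the recursive construction, as an augmentation of a partition in $\P^{\l-1,\l+m-2}$, so $\mu(\l)\geq 1$ and more generally $\mu$ is controlled by the ``$1\leq j\leq\mu_{\l-1}$'' constraint; in any case $t$ is defined on all of $[\l]\times[\l+m-1]$ so the three cases are exhaustive).

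For part (a): the equivalence $t(r,s)<r \iff s<r$ is immediate, since in the first branch $t(r,s)=s<r$, in the middle branch $t(r,s)=r$, and in the last branch $t(r,s)=s-m+1 > r$ because $s>m+r-1$. Symmetrically, $t(r,s)>r$ forces the last branch, and there $t(r,s)=s-m+1>r \iff s>m+r-1$; conversely $s>m+r-1$ lands in the last branch and gives $t(r,s)>r$. For part (b): I would read $t\circ(\gamma_\l\times\gamma_{\l+m-1})$ as the map $(r,s)\mapsto t(\l+1-r,\l+m-s)$ and check, branch by branch, that it equals $\l+1-t(r,s)$; the three branches for $(\l+1-r,\l+m-s)$ correspond (in reversed order) to the three branches for $(r,s)$ — e.g. $\l+m-s<\l+1-r$ is the same as $s>m+r-1$ — and in each case the arithmetic collapses to $\l+1$ minus the value of the matching branch. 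This is exactly the assertion $t\circ(\gamma_\l\times\gamma_{\l+m-1})=\gamma_\l\circ t$; I note the statement as written has $\gamma_\l\circ\mu$ on the right, which I read as the intended $\gamma_\l\circ t$ (or, composed with $i\mapsto(i,\mu(i))$ on both sides, this is what is actually used in Lemma \ref{propt}(c)).

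For part (c): with $\mu'=\tau(\mu)=\gamma_{\l+m-1}\circ\mu\circ\gamma_\l$, I compute $\widetilde{\mu'}(i)=t(i,\mu'(i))=t(i,\,\gamma_{\l+m-1}(\mu(\gamma_\l(i))))=t\big(\gamma_\l(\gamma_\l(i)),\,\gamma_{\l+m-1}(\mu(\gamma_\l(i)))\big)$, which by part (b) applied at the point $(\gamma_\l(i),\mu(\gamma_\l(i)))$ equals $\gamma_\l\big(t(\gamma_\l(i),\mu(\gamma_\l(i)))\big)=\gamma_\l\big(\tilde\mu(\gamma_\l(i))\big)$. Hence $\widetilde{\mu'}=\gamma_\l\circ\tilde\mu\circ\gamma_\l$, and composing once more with $\gamma_\l$ on the left (using $\gamma_\l^2=\mathrm{id}$) gives $\gamma_\l\circ\widetilde{\mu'}=\tilde\mu\circ\gamma_\l$, as claimed.

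The only step with any subtlety is part (b), since one must correctly pair up the three branches under the simultaneous reflections and make sure the edge cases of the inequalities (the boundaries $s=r$ and $s=m+r-1$) map to each other — but since $t$ is well-defined and single-valued on the overlaps by the non-strict inequalities in the middle branch, this causes no trouble. I do not expect any genuine obstacle; the lemma is stated as ``easy to prove'' precisely because it is a finite case check, and the real payoff is that parts (a) and (c) are the two facts that let one transport the compression map $\tilde\mu$ through the involution $\tau$ in the arguments that follow.
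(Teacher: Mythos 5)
Your proof is correct and takes essentially the same route as the paper: (a) by inspecting the three branches, (b) by the branch-by-branch check that $t(\gamma_\l(r),\gamma_{\l+m-1}(s))=\gamma_\l(t(r,s))$, and (c) by composing with (b) and $\gamma_\l^2=\mathrm{id}$. Your reading of the misprint in (b) is also right — the intended right-hand side is $\gamma_\l\circ t$, which is exactly the form the paper's own proof of (c) uses.
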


\begin{proof}
(a) is clear from the definition of $t$. For (b), notice that
\begin{multline*}t(\gamma_\l(r),\gamma_{\l+m-1}(s))\\=\begin{cases}
\l+m-s&\text{if }\l+m-s<\l+1-r\\\l+1-r&\text{if }\l+1-r\leq
\l+m-s\leq m+(\l+1-r)-1\\(\l+m-s)-m+1&\text{if
}\l+m-s>m+(\l+1-r)-1
\end{cases}\end{multline*}
$$=\begin{cases}
\gamma_\l(s-m+1)&\text{if }r+m-1<s\\
\gamma_\l(r)&\text{if }r\leq s\leq m+r-1=\gamma_\l(\mu(r,s))\\
\gamma_\l(s)&\text{if }s<r
\end{cases}$$
The conclusion follows.

(c) follows from Lemma (b) and the definition of $\mu'$:
$$\gamma_\l\big(\widetilde{\mu'}(i)\big)=\gamma_\l(t(i,\mu'(i)))=t(\gamma_\l(i),\gamma_{\l+m-1}(\mu'(i)))
=t(\gamma_\l(i),\mu(\gamma_{\l}(i)))=\t\mu(\gamma_{\l}(i))$$

\end{proof}

Now we provide a characterization for
$\P^\l(\Omega_m).$

\begin{thm}\label{tilde}
Let $\mu\in\P^{\l,\l+m-1}.$ Then $\mu\in\P^\l(\Omega_m)$ if and
only if $\tilde\mu$ satisfies the following conditions for every
$i\in[\l]$:
\begin{enumerate}
\item[(i)] If $\tilde\mu(i)>i$ then $\tilde\mu(\tilde\mu(i))> i$.
\item[(ii)]  If $\tilde\mu(i)<i$ then $\tilde\mu(\tilde\mu(i))<
i$.
\end{enumerate}
\end{thm}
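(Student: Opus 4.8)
The plan is to mirror the structure of the proof of Theorem \ref{sq}, working by induction on $\l$ and exploiting the recursive definition of $\P^\l(\Omega_m)$ together with the compatibility of the map $\mu\mapsto\t\mu$ with the involution $\tau$ recorded in Lemma \ref{propt}(c). The base case $\l=1$ is immediate: $\P^1(\Omega_m)=\Omega_m$, every $\mu=(j)$ has $\t\mu(1)=t(1,j)=1$ by definition of $t$, so $\t\mu(1)=1$ and conditions (i)--(ii) are vacuously true.

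For the inductive step I would first show that the two conditions are stable under $\tau=\tau_{m+\l-1}$. By Lemma \ref{propt}(c) we have $\gamma_\l\circ\widetilde{\mu'}=\t\mu\circ\gamma_\l$ where $\mu'=\tau(\mu)$, i.e.\ $\widetilde{\mu'}(i)=\gamma_\l\big(\t\mu(\gamma_\l(i))\big)$; since $\gamma_\l$ is order-reversing, $\widetilde{\mu'}(i)>i\Leftrightarrow\t\mu(\gamma_\l(i))<\gamma_\l(i)$, and a short computation shows $\widetilde{\mu'}$ satisfies (i) for $i$ exactly when $\t\mu$ satisfies (ii) for $\gamma_\l(i)$, and vice versa. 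Hence $\mu$ satisfies (i)--(ii) iff $\tau(\mu)$ does, so it suffices to analyze the ``d'' half $\P^\l_{\rm d}(\Omega_m)$ of the recursion. Here, if $\mu=(\nu:j)\in\P^\l_{\rm d}(\Omega_m)$ with $\nu\in\P^{\l-1}(\Omega_m)$ and $1\le j\le\nu_{\l-1}$, then for $i<\l$ one has $\mu(i)=\nu(i)$, so $\t\mu(i)=t(i,\nu(i))=\tilde\nu(i)$ provided $i\le\l-1$; the only genuinely new index is $i=\l$, where $\mu(\l)=j\le\nu_{\l-1}<\l$ forces $\t\mu(\l)=t(\l,j)=j<\l$, and one checks $\t\mu(\t\mu(\l))=\t\mu(j)=\tilde\nu(j)\le j<\l$ using that $\nu$ is a partition. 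So the conditions on $\t\mu$ at indices $<\l$ reduce to the conditions on $\tilde\nu$, and at $i=\l$ they hold automatically; this gives the forward direction, and the converse direction is obtained by the same reduction: given $\mu$ satisfying (i)--(ii), either $\mu(1)<\l+m-1$ (so $\mu=(\nu:\mu(\l))$ is in $\P^\l_{\rm d}$, and one must verify $\nu\in\P^{\l-1}(\Omega_m)$ by induction, which needs the inequality $\mu(\l)\le\mu(\l-1)=\nu_{\l-1}$ plus the fact that $\t\nu$ inherits (i)--(ii)) or else $\mu(1)=\l+m-1$ and one passes to $\tau(\mu)$ as in the proof of Theorem \ref{sq}.

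The delicate point, and I expect it to be the main obstacle, is checking that when $\mu=(\nu:j)$ the truncation $\nu$ really lies in $\P^{\l-1}(\Omega_m)$: this requires knowing that $\tilde\nu$ satisfies (i)--(ii) for \emph{all} $i\in[\l-1]$, but the definition of $\t\mu(i)=t(i,\mu(i))$ for $i<\l$ uses the first coordinate $i$ of $t$, which is unchanged by truncation, so $\t\mu(i)=\tilde\nu(i)$ for $i\in[\l-1]$ and the inherited conditions transfer verbatim --- the subtlety is only that one must also confirm the range condition $1\le j\le\nu_{\l-1}$, equivalently $\mu(\l)\le\mu(\l-1)$, which is automatic since $\mu$ is a partition, and that $\mu\in\P^{\l,\l+m-1}$ forces $\nu\in\P^{\l-1,\l+m-2}=\P^{\l-1,(\l-1)+m-1}$, which again is automatic because all parts of $\nu$ are at most $\mu_1\le\l+m-1$ and in fact at most $\l+m-2$ unless $\mu_1=\l+m-1$, the case handled by $\tau$. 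Once these bookkeeping points are in place the induction closes exactly as in Theorem \ref{sq}, using Lemma \ref{propt}(a) to translate the conditions ``$\t\mu(i)>i$'' and ``$\t\mu(i)<i$'' back into statements about whether $\mu(i)$ lies below, inside, or above the $m$-band around $i$.
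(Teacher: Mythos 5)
Your proposal follows the same route as the paper: induction on $\ell$, with the key auxiliary fact (your ``$\tau$-stability'' step, the paper's Claim~\ref{tau}, proved exactly as you indicate from Lemma~\ref{propt}(c)) used to reduce both directions to the $\P^\ell_{\rm d}(\Omega_m)$ case, then checking the new index $i=\ell$ in the forward direction and truncating in the converse. Structurally this is the paper's argument.

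However, two of your justifications at the index $i=\ell$ are false as written. First, you claim $j\le\nu_{\ell-1}<\ell$; but $\nu_{\ell-1}$ can be $\ge\ell$ when $m\ge2$ (e.g.\ $m=2$, $\nu=(3,3)\in\P^2(\Omega_2)$, $j=3$, $\mu=(3,3,3)$, where $\tilde\mu(3)=t(3,3)=3=\ell$). So $\tilde\mu(\ell)<\ell$ need not hold; the correct statement is that if $\tilde\mu(\ell)=\ell$ then neither premise (i) nor (ii) is triggered at $i=\ell$ (and the premise of (i) is impossible there in any case), while if $\tilde\mu(\ell)<\ell$ one argues as below. Second, you assert $\tilde\nu(j)\le j$ ``using that $\nu$ is a partition''; this is not true in general ($\tilde\nu$ is not monotone, e.g.\ $\nu=(4,4,1)$, $m=2$ gives $\tilde\nu(1)=3>1$). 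What you actually need, and what the paper uses, is only that $\tilde\nu$ takes values in $[\ell-1]$, so $\tilde\mu(\tilde\mu(\ell))=\tilde\nu(\tilde\mu(\ell))\le\ell-1<\ell$. Both slips are easily repaired and do not affect the architecture of the proof. One smaller point: in the converse, when you pass to $\tau(\mu)$ because $\mu(1)=\ell+m-1$ (equivalently $\tilde\mu(1)=\ell$), you should record why $\tau(\mu)$ then lands in the first case, namely that condition (i) at $i=1$ forces $\tilde\mu(\ell)>1$, so $\widetilde{\tau(\mu)}(1)<\ell$; the paper makes this explicit (``it is not possible to have $\tilde\mu(1)=\ell$ and $\tilde\mu(\ell)=1$''), whereas your appeal to ``as in Theorem~\ref{sq}'' leaves it implicit.
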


First we prove the following:
\begin{claim}\label{tau}
$\mu\in\P^{\l,m+\l-1}$ satisfies (ii) if and only if $\tau(\mu)$
satisfies (i).
\end{claim}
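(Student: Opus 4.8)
The plan is to reduce Claim~\ref{tau} to the already-established Lemma~\ref{propt}(c), which gives $\gamma_\l\circ\widetilde{\mu'}=\tilde\mu\circ\gamma_\l$ for $\mu'=\tau(\mu)$. First I would write out what condition (ii) for $\mu$ and condition (i) for $\mu'=\tau(\mu)$ say explicitly in terms of $\tilde\mu$ and $\widetilde{\mu'}$, namely: (ii) for $\mu$ is the implication $\tilde\mu(i)<i\Rightarrow\tilde\mu(\tilde\mu(i))<i$ for all $i\in[\l]$, and (i) for $\mu'$ is $\widetilde{\mu'}(j)>j\Rightarrow\widetilde{\mu'}(\widetilde{\mu'}(j))>j$ for all $j\in[\l]$.

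Next I would substitute $j=\gamma_\l(i)$ (which is a bijection of $[\l]$, so quantifying over $j$ is the same as quantifying over $i$) and use Lemma~\ref{propt}(c) in the form $\widetilde{\mu'}(\gamma_\l(i))=\gamma_\l(\tilde\mu(i))$. Since $\gamma_\l$ is order-reversing, $\widetilde{\mu'}(j)>j$ becomes $\gamma_\l(\tilde\mu(i))>\gamma_\l(i)$, i.e.\ $\tilde\mu(i)<i$. For the conclusion, I need to iterate: $\widetilde{\mu'}(\widetilde{\mu'}(j))=\widetilde{\mu'}(\gamma_\l(\tilde\mu(i)))$, and applying Lemma~\ref{propt}(c) again with $i$ replaced by $\tilde\mu(i)$ gives $\widetilde{\mu'}(\gamma_\l(\tilde\mu(i)))=\gamma_\l(\tilde\mu(\tilde\mu(i)))$. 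Hence $\widetilde{\mu'}(\widetilde{\mu'}(j))>j$ translates, again via order-reversal, to $\tilde\mu(\tilde\mu(i))<i$. So the implication ``(i) for $\mu'$'' at $j=\gamma_\l(i)$ is literally the same statement as ``(ii) for $\mu$'' at $i$, and since $i\mapsto\gamma_\l(i)$ runs over all of $[\l]$, the two conditions are equivalent. This proves the claim.

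The only subtle point — and the one place to be careful — is the second application of Lemma~\ref{propt}(c): one must check that the intermediate quantity $\gamma_\l(\tilde\mu(i))$ that appears as the argument of $\widetilde{\mu'}$ is indeed of the form $\gamma_\l(i')$ with $i'=\tilde\mu(i)\in[\l]$, so that the lemma applies. Since $\tilde\mu$ maps $[\l]$ to $[\l]$, this is automatic. Everything else is a routine unwinding of the order-reversing bijection $\gamma_\l$, so I do not expect any genuine obstacle; the claim is essentially a bookkeeping consequence of the functional identity in Lemma~\ref{propt}(c) together with the symmetry that swaps ``$>$'' with ``$<$''.

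\begin{proof}[Proof of Claim \ref{tau}]
Write $\mu'=\tau(\mu)$ and $\gamma=\gamma_\l$. By Lemma \ref{propt}(c) we have $\widetilde{\mu'}(\gamma(i))=\gamma(\tilde\mu(i))$ for every $i\in[\l]$. Since $\gamma$ is an order-reversing bijection of $[\l]$, for any $i\in[\l]$ and $j=\gamma(i)$ the inequality $\widetilde{\mu'}(j)>j$ is equivalent to $\gamma(\tilde\mu(i))>\gamma(i)$, i.e.\ to $\tilde\mu(i)<i$. Assuming this, applying Lemma \ref{propt}(c) with $i$ replaced by $\tilde\mu(i)\in[\l]$ gives
$$\widetilde{\mu'}(\widetilde{\mu'}(j))=\widetilde{\mu'}\big(\gamma(\tilde\mu(i))\big)=\gamma\big(\tilde\mu(\tilde\mu(i))\big),$$
so $\widetilde{\mu'}(\widetilde{\mu'}(j))>j$ is equivalent to $\gamma(\tilde\mu(\tilde\mu(i)))>\gamma(i)$, i.e.\ to $\tilde\mu(\tilde\mu(i))<i$. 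Therefore the assertion that $\mu'$ satisfies (i) at $j=\gamma(i)$ is exactly the assertion that $\mu$ satisfies (ii) at $i$. As $i$ ranges over $[\l]$, so does $j=\gamma(i)$, and the claim follows.
\end{proof}
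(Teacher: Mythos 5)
Your proof is correct and follows essentially the same route as the paper: both arguments rest on Lemma \ref{propt}(c) together with the order-reversing bijection $\gamma_\l$ to transport condition (ii) for $\tilde\mu$ into condition (i) for $\widetilde{\tau(\mu)}$. Your formulation as a literal equivalence of the implications under the substitution $j=\gamma_\l(i)$ even delivers both directions of the ``if and only if'' at once, whereas the paper writes out only one direction explicitly.
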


\begin{proof}
Assume that $\mu$ satisfies (ii) and that $\widetilde{\mu'}(i)>i$
where $\mu'=\tau(\mu)$. From Lemma \ref{propt} (c) and the fact
that $\gamma_\l$ is order-reversing we see that
$\gamma_\l(i)>\gamma_\l\big(\widetilde{\mu'}(i)\big)=\t{\mu}(\gamma_\l(i))$
and as a consequence of (i),
$\gamma_\l(i)>\t{\mu}(\t{\mu}(\gamma_\l(i)))$. Applying
$\gamma_\l$ again we get
$i<\gamma_\l(\t{\mu}(\t{\mu}(\gamma_\l(i))))$, but
$\gamma_\l(\t{\mu}(\t{\mu}(\gamma_\l(i))))=\widetilde{\mu'}(\gamma_\l(\t{\mu}(\gamma_\l(i))))
=\widetilde{\mu'}\big(\widetilde{\mu'}(i)\big).$ The conclusion
follows.
\end{proof}

\begin{proof}[Proof of Lemma \ref{tilde}]
First we prove by induction that every $\mu\in\P^\l(\Omega_m)$ satisfies
(i) and (ii). If $\l=1$ then (i) and (ii) are vacuum since
$\t\mu(1)=1.$

Because of Claim \ref{tau} we just have to consider
$\mu=(\nu:j),\,1\leq j\leq\nu(\l-1)$ to complete the induction
step. Clearly $\t\mu(i)=\t\nu(i)\in[\l-1]$ for $i\in[\l-1]$, and
the premise of (i) is impossible if $i=\l$, so we just have to
prove that (ii) holds for $i=\l$. But $\tilde\mu(\l)<\l$ implies
$\tilde\mu(\tilde\mu(\l))=\tilde\nu(\tilde\mu(\l))\leq\l-1< \l$
since $\t\nu:[\l-1]\to[\l-1].$

Now we prove that if $\mu\in\P^{\l,m+\l-1}$ is so that $\tilde\mu$
satisfies (i) and (ii), then $\mu\in\P^\l(\Omega_m)$. For $\l=1$
there is nothing to prove since $\P^1(\Omega_m)=\P^{1,m}$. Assume
$\l>1$. It is not possible to have $\t\mu(1)=\l$ and $\t\mu(\l)=1$
since this would contradict (i) and (ii). If $\t\mu(1)=\l$ then if
follows from Lemma \ref{propt} (c) and Claim \ref{tau} that we can
replace $\mu$ by $\tau(\mu)$. Therefore we can assume that
$\t\mu(1)<\l$. From the definition of $t$ if follows that
$\mu(1)<\l+m-1$ and therefore no part of $\mu$ is bigger than
$\l+m-1$. So $\nu=\mu\setminus\mu_\l\in \P^{\l-1,m+\l-2}$ and it
satisfies (i) and (ii). By the induction hypothesis
$\nu\in\P^{\l-1}(\Omega_m)$ and therefore $\mu=(\nu:\mu_j)\in
\P^\l(\Omega_m)$.
\end{proof}

\begin{rem}\label{remlambda1}
Clearly $\P^\l(\Omega_1)=\P^\l(\lambda)$ where $\lambda=(1)$ is
the only element in $\Psq^1$. In this case Lemma \ref{sq} states
that $\mu\in\P^\l(1)$  if and only if:
\begin{enumerate}
\item[(i) and (iv)] There exists a positive integer $b\in[\l]$ so that $\mu(b)=b$, \item[(ii)]
 If $1\leq i< b$ then $\mu(\mu(i))>i$, \item[(iii)] If
$b< i\leq \l$ then $\mu(\mu(i))<i$ .
\end{enumerate}

Since $\t\mu=\mu$ for $m=1$, Lemma \ref{tilde} states that
$\mu\in\P^\l(\Omega_1)$ if and only if $\mu$ satisfies the
following conditions for every $i\in[\l]$:
\begin{enumerate}
\item[(i)] If $\mu(i)>i$ then $\mu(\mu(i))> i$. \item[(ii)]  If
$\mu(i)<i$ then $\mu(\mu(i))< i$.

\end{enumerate}

We claim that both sets of conditions coincide:

``Conditions in Lemma \ref{sq} $\Rightarrow$ Conditions in Lemma
\ref{tilde}": If $i\leq b$ then $\mu(i)\geq\mu(b)=b\geq i$.
Therefore $\mu(i)<i$ implies $i>b$ which in turn implies
$\mu(\mu(i))< i$ and similarly $\mu(i)>i$ implies $\mu(\mu(i))>i$.

``Conditions in Lemma \ref{tilde} $\Rightarrow$ Conditions in
Lemma \ref{sq}": Lemma \ref{existence} implies that
$\mu\in\P^\l(\lambda)$ for some $\lambda\in\Psq^k$, and therefore
$(\mu\circ\mu)(b)=b$ for some $b\in[\l]$, but the conditions in
Lemma \ref{tilde} imply that $\mu(b)=b$ and therefore $k=1.$ Since
$\mu$ is increasing, $i< b$ (resp.\ $i>b$) implies that
$\mu(i)\geq\mu(b)=b>i$ (resp.\ $\mu(i)\leq\mu(b)=b<i$). and
therefore $\mu(\mu(i))>i$ (resp.\ $\mu(\mu(i))<i$).
\end{rem}

\section{Trees, forests and Catalan numbers} \label{bijections}

In this section we will define a bijection between
$\P^\l(\lambda)$ (resp.\ $\P^\l(\Omega_m)$) and a family
enumerated by the Catalan (resp.\ ballot) numbers.
The Catalan numbers appear in various counting problems, see
\cite{Stanley} for a 66 interpretations of the Catalan numbers. Some of these generalize to the ballot numbers.

Consider the set
$\T_\l$ of pairs $(T^-,T^+)$ of rooted trees with a total of
$\l-1$ edges. These pairs are in bijection with trees with $\l$
edges: cutting the rightmost branch from the root creates such a
pair, and viceversa, we can attach a tree as the rightmost subtree
of the root (See Figure \ref{2trees} for an example). Therefore
there are $c_{\l}$ pairs of rooted trees with a total of $\l-1$
edges. We will use $\T_\l$ to establish a bijection in order to prove Theorem \ref{thm1}(i).

\begin{figure}[h]
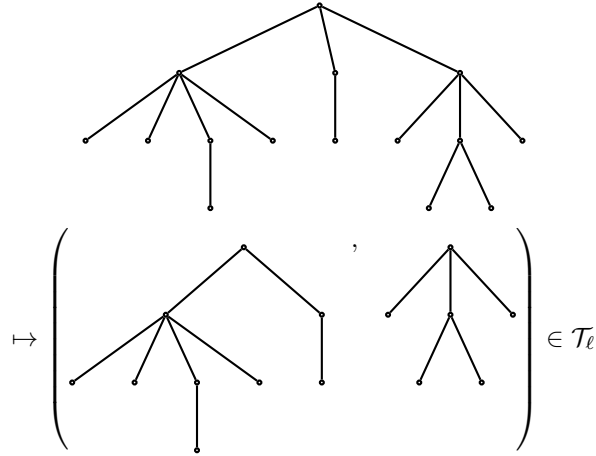

\begin{center}
\psset{radius=.4mm} \pstree[levelsep=0.9cm]{\TC}{
\pstree{\TC}{\TC\TC\pstree{\TC}{\TC}\TC} \pstree{\TC}{\TC}
\pstree{\TC}{\TC\pstree{\TC}{\TC\TC}\TC} }
$$\mapsto\begin{pmatrix}
\psset{radius=0.4mm} \pstree[levelsep=0.9cm]{\TC}{
\pstree{\TC}{\TC\TC\pstree{\TC}{\TC}\TC} \pstree{\TC}{\TC}
 }&
,
 & \pstree[levelsep=0.9cm]{\TC}{\TC\pstree{\TC}{\TC\TC}\TC}
\end{pmatrix}\in\T_\l$$
\end{center}\caption{Cutting a tree in two.}\label{2trees}
\end{figure}


The generalized Catalan numbers are defined by the formula
$C_{k,\gamma}(n)=\frac{\gamma}{nk+\gamma} {{kn+\gamma}\choose{n}}$
(see \cite{GS2006, Chen2008, Stanley}). $C_{k,\gamma}(n)$ is the number of ordered forests
with
 $\gamma$ $k$-ary trees and with total
number of $n$ internal vertices. The ballot numbers are a special
case of the generalized Catalan numbers since $b_{\l,m-1}=
C_{2,m}(\l)$. Therefore $b_{\l,m-1}$ is the number of ordered
forests with
 $m$ binary trees and with total number of $\l$ internal vertices, or equivalently,
 the number of ordered forests with $m$ trees and with total number of $\l$ edges, since there is a
bijection between binary trees with $n$ vertices and rooted trees
with $n$ edges. As before, we can cut non-empty trees to obtain
pairs of trees. We will use this to establish Theorem \ref{thm2}.

\subsection{$\boldsymbol{\P^\l(\lambda)}$ and Trees}
\begin{rem} Let $\lambda\in\Psq^k$.
It follows from Theorem \ref{sq} that we can define a map
$\theta:\P^{\l}(\lambda)\to\P^{\l-k+1}(1)$ by
$$\theta(\mu)(i)=\begin{cases} \mu(i)-k+1&1\leq i\leq
b\\\mu(i+k-1)&b\leq i\leq\l-k+1
\end{cases}$$

The effect of $\theta$ is to ``remove"  a $k\times k$ square from
$\mu$ and replace it with a $1\times 1$ square which becomes a
fixed point of the partition.
\end{rem}

\begin{example}
Let $\mu=(7 , 6 , 5 , 3 , 3 , 3 , 3 , 3 , 3 , 1)$.
$b=3,\,\mu(b)=5=b+k-1$, therefore $k=3$,
$\mu\in\P^3(\lambda)=\P^3(\tau(\lambda))$ where $\lambda=(3,1,1)$,
and $\theta(\mu)=(5,4,3,3,3,3,3,1)$. See Figure
\ref{remove}.\end{example}

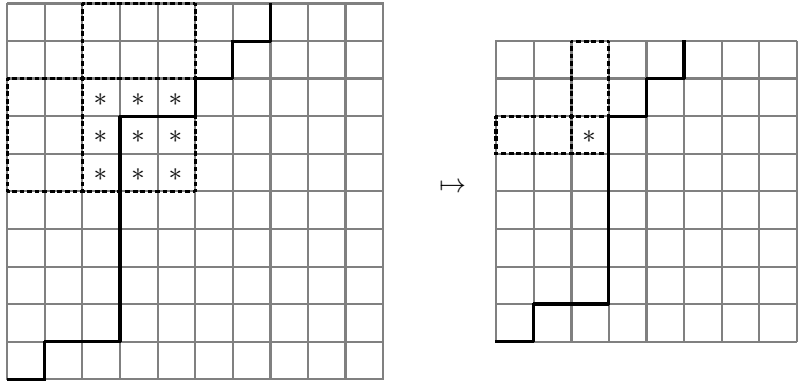
\begin{figure}[h]
\begin{center}
\setlength{\unitlength}{0.5mm}\begin{picture}(210,110)
{\color{gray} \multiput(0,0)(0,10){11}{\line(1,0){100}}
\multiput(0,0)(10,0){11}{\line(0,1){100}}}

\thicklines

\put(00,50){\dashbox(50,30)}
\put(20,50){\dashbox(30,50)}

\put(0,0){\line(1,0){10}} \put(10,10){\line(1,0){20}}
\put(30,70){\line(1,0){20}}
\put(50,80){\line(1,0){10}}\put(60,90){\line(1,0){10}}

\put(10,10){\line(0,-1){10}} \put(30,70){\line(0,-1){60}}
\put(50,80){\line(0,-1){10}}\put(60,90){\line(0,-1){10}}
\put(70,100){\line(0,-1){10}}

\put(115,50){$\mapsto$} \thinlines

{\color{gray} \multiput(130,10)(0,10){9}{\line(1,0){80}}
\multiput(130,10)(10,0){9}{\line(0,1){80}}}

\thicklines

\put(130,10){\line(1,0){10}} \put(140,20){\line(1,0){20}}
\put(160,70){\line(1,0){10}} \put(170,80){\line(1,0){10}}

\put(140,20){\line(0,-1){10}} \put(30,70){\line(0,-1){60}}
\put(160,70){\line(0,-1){50}}\put(170,80){\line(0,-1){10}}
\put(180,90){\line(0,-1){10}}

\multiput(23,51)(10,0){3}{*} \multiput(23,61)(10,0){3}{*}
\multiput(23,71)(10,0){3}{*}

\put(153,61){*}
\put(130,60){\dashbox(30,10)}
\put(150,60){\dashbox(10,30)}

\end{picture}
\end{center}\caption{An example of the action of $\theta$.}\label{remove}
\end{figure}

Conditions (i)-(iii) in Theorem \ref{sq} guarantee that $\theta$
is well defined and surjective. (iv) implies that the fibers of
$\theta$ have one or two elements depending of whether or not
$\lambda=\tau(\lambda)$. This proves that
$$\#\P^\l(\lambda)=\begin{cases}
\#\P^{\l-k+1}(1) & \lambda=\tau(\lambda)\\
2\#\P^{\l-k+1}(1) & \lambda\neq\tau(\lambda)
\end{cases}.$$
To establish Theorem \ref{thm1}(iii), we just need to prove
that $\#\P^{\l}(1)=c_{\l}$. In order to do this, we will define a bijection between $\P^{\l}(1)$ and $\T_\l$.

Now we describe how to create an element of $\P^\l(1)$ from a pair
of rooted trees $(T^-,T^+)$. Number the levels of both trees so
that the roots are located in level 1. Starting from the deepest
level and moving up and from left to right, label the vertices of
the odd levels of $T^-$ and the even levels of $T^+$ consecutively
with the numbers $1,2,3\dots$, and label the vertices of the even
levels of $T^-$ and the odd levels of $T^+$ consecutively with the
numbers $\l, \l-1,l-2\dots.$ Then label the roots of $T^\pm$ with
$b^\pm$, where $b\in[\l]$ is the only number that has not been
used so far. To define $\mu:[\l]\to[\l]$ let $\mu(i)$ be the
parent of $i$ (if the patent of $i$ is $b^\pm$, define
$\mu(i)=b$), and $\mu(b)=b$.

\begin{example}\label{ex14} Consider the pair of trees in Figure
\ref{2trees}. After numbering the nodes, we get the pair of trees
in Figure \ref{2labelledtrees}.

\begin{figure}[h]
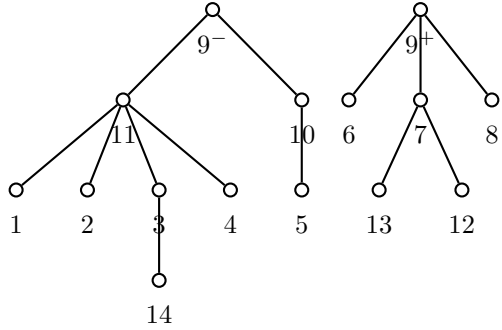

\begin{center}
\psset{radius=1mm} \pstree[levelsep=1.2cm]{\TC~{$9^-$}}{
\pstree{\TC~{11}}{\TC~{1}\TC~{2}\pstree{\TC~{3}}{\TC~{14}}\TC~{4}}
\pstree{\TC~{10}}{\TC~{5}}
 }\quad
 \pstree[levelsep=1.2cm]{\TC~{$9^+$}}{\TC~{6}\pstree{\TC~{7}}{\TC~{13}\TC~{12}}\TC~{8}}
\end{center}\caption{A pair of labelled trees}\label{2labelledtrees}
\end{figure}

The corresponding partition is
\begin{center}
\begin{tabular}{|c||c|c|c|c|c|c|c|c|c|c|c|c|c|c|c|c|c|c|c|c|c|c|c|c|c|}\hline
$i$&1&2&3&4&5&6&7&8&9&10&11&12&13&14\\\hline
$\mu(i)$&11&11&11&11&10&9&9&9&9&9&9&7&7&3\\\hline
\end{tabular}
\end{center}
\end{example}

\begin{lemma}
The map $\mu:[\l]\to[\l]$ obtained by this process is a partition in $\P^\l(1)$.
\end{lemma}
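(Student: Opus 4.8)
The plan is to show that the function $\mu:[\l]\to[\l]$ produced by the labelling procedure is (a) well defined, (b) a genuine partition, i.e.\ weakly decreasing, and (c) satisfies the two inequality conditions of Remark~\ref{remlambda1} characterizing $\P^\l(1)$. First I would check well-definedness: the labelling uses each of the numbers $1,\dots,\l$ exactly once --- the ``small'' labels $1,2,\dots$ exhaust the vertices on odd levels of $T^-$ and even levels of $T^+$, the ``large'' labels $\l,\l-1,\dots$ exhaust the complementary set of non-root vertices, and since $(T^-,T^+)$ has $\l-1$ edges it has $\l+1$ vertices, two of which are the roots; so exactly one label $b$ is left for the two roots $b^\pm$, and $\mu$ is then defined on all of $[\l]$ (with $\mu(b)=b$). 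The key structural observation to record is the \emph{sign/level dictionary}: writing each $i\in[\l]$ according to which tree and which level-parity it came from, a vertex $i$ is a ``small'' label iff it lies on an odd level of $T^-$ or an even level of $T^+$, and ``large'' otherwise; its parent $\mu(i)$ lies one level up, hence has the opposite parity, hence the opposite size-type. This immediately gives the alternation that drives everything.

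Next I would prove monotonicity. The cleanest route is to observe that within the sibling order used for labelling, the assignment of consecutive integers respects the tree structure in the following sense: if $i<j$ are two small labels, their parents satisfy $\mu(i)\ge \mu(j)$ (parents are labelled later, i.e.\ with larger ``small'' values, and going left-to-right/bottom-up the parent of an earlier child is never to the right of the parent of a later child --- this is exactly the standard fact that makes the tree$\leftrightarrow$parking-function/partition correspondence work), while if $i<j$ are two large labels the parents satisfy $\mu(i)\le\mu(j)$, but since large labels are assigned in \emph{decreasing} order $\l,\l-1,\dots$, ``$i<j$ as integers'' reverses the traversal order, and one checks the net effect is again $\mu(i)\ge\mu(j)$. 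One must also handle a small $i$ and a large $j$ (or the root value $b$): here the sign dictionary says $\mu(i)$ is large and $\mu(j)$ is small, and the ranges of small vs.\ large labels are comparable because $b$ sits between them by construction --- small labels are $\le b$ and large labels are $\ge b$, so whichever of $i,j$ is smaller, the parent comparison $\mu(i)\ge\mu(j)$ follows from $\{\text{large labels}\}\ge b\ge\{\text{small labels}\}$. I expect this monotonicity bookkeeping --- getting all the parity/order/size cases to line up --- to be \textbf{the main obstacle}; everything else is comparatively formal.

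Finally I would verify the two conditions of Remark~\ref{remlambda1}. The point is that $b$ is a fixed point of $\mu$ and, more importantly, of $\mu\circ\mu$: I would show $i< b \Rightarrow \mu(\mu(i))>i$ and $i>b\Rightarrow\mu(\mu(i))<i$ by tracking the sign dictionary. If $i$ is a small label (so $i\le b$), then $\mu(i)$ is a large label (so $\ge b$) unless $\mu(i)=b$; applying $\mu$ again lands us back on a small-type value, and the ``move up two levels in the tree'' interpretation shows $\mu(\mu(i))$ is the grandparent of $i$ (or $b$), whose small-label value is $\ge$ that of $i$; combined with the strictness that comes from $i$ being a \emph{proper} descendant, one gets $\mu(\mu(i))> i$ when $i<b$. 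The case of a large label $i\ge b$ is symmetric: $\mu(i)$ is small, $\mu(\mu(i))$ is again large, it is the grandparent in $T^-$ or $T^+$, and because large labels decrease along the traversal while we move toward the root, $\mu(\mu(i))< i$ when $i>b$. Here it is convenient to invoke the symmetry already in the paper: the construction is visibly equivariant under swapping $(T^-,T^+)\mapsto(T^+,T^-)$ together with $\mu\mapsto\tau_\l(\mu)$ and $b\mapsto \l+1-b$ (this is exactly Lemma~\ref{ii-iii} with $k=1$), so it suffices to treat the small-label case and quote Lemma~\ref{ii-iii} for the other. Assembling (a), (b), (c) and citing Remark~\ref{remlambda1} (equivalently Theorem~\ref{sq} with $k=1$) yields $\mu\in\P^\l(1)$.
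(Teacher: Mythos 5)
Your proposal is correct and takes essentially the same route as the paper: the parity dictionary (the parent of a small-labelled vertex carries a large label or $b$, and vice versa), a case analysis on pairs $i,j$ for monotonicity based on the bottom-up, left-to-right traversal, and the grandparent argument for the two conditions of Remark~\ref{remlambda1}, with the mirrored case handled by the tree-swap/$\tau$ symmetry, exactly as in the paper's ``similar considerations''. One wording slip worth fixing: in the small--small case the parents are not ``labelled later with larger small values'' --- they carry labels from $\{b\}\cup H$, which decrease along the traversal (with $b$, assigned to the roots, coming last), and it is this, combined with the parent-order fact you cite, that gives $\mu(i)\ge\mu(j)$; your stated conclusions in every case are nonetheless the correct ones.
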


\begin{proof}
To simplify the notation, let $L=[b-1],\,H=[\l]\setminus[b].$ We
have to prove that $\mu$ is decreasing and satisfies the
conditions in Remark \ref{remlambda1}.

It is clear form the construction that $\mu(b)=b$, $i\in L$
implies $\mu(i)\notin L$, and $i\in H$ implies $\mu(i)\notin H$.
Let $1\leq i\leq j<\l$. If $i\in L$ and $j\in H$ then
$\mu(i)\in\{b\}\cup H$ and $\mu(j)\in\{b\}\cup L$. Therefore
$\mu(i)\geq\mu(j).$ Similar considerations hold if one among $i,j$
is equal to $b$. Assume that $i,j\in L$. Then either $i$ is
located at a level deeper than $j$, or they are in the same level
but $i$ is to the left of $j$, and the same is true about their
parents. But their parents $\mu(i),\mu(j)\in\{b\}\cup H$, and
therefore $\mu(i)\geq\mu(j)$. A similar argument works if we
assume $i,j\in H$.

The conditions in Remark \ref{remlambda1} say
\begin{enumerate}
\item[(i)] $\mu(b)=b$
\item[(ii)] $i\in L\Rightarrow\mu(\mu(i))>i.$
\item[(iii)] $i\in H\Rightarrow\mu(\mu(i))<i.$
\end{enumerate}

(i) follows from the construction of $\mu$. If $i\in L$ then
either $\mu(\mu(i))$ is two levels above $i$ and therefore
$\mu(\mu(i))>i$, or $\mu(i)=b$, so $\mu(\mu(i))=b>i$. Similar
considerations work if $i\in H.$
\end{proof}

The process can be reversed: for $\mu\in\P^\l(1)$ let $b$ be its
fixed point, and $L,\,H$ as before. We define a pair of trees
$(T^-,T^+)$ with $i\in([l]\setminus\{b\})\cup\{b^-,b^+\}$
as vertex set. The root of $T^\pm$ is $b^\pm$, and the edges are
drawn according to the following rules:
\begin{enumerate}
\item  If $i\in[l]\setminus\{b\}$ and $\mu(i)\neq b$, we draw an
edge $i\to\mu(i)$. \item If $i\in L$ and $\mu(i)=b$, we draw an
edge $i\to b^+$. \item If $i\in H$ and $\mu(i)=b$, we draw an edge
$i\to b^-$.
\end{enumerate}
The vertices in $L$ (resp.\ $H$) are organized increasingly
(resp.\ decreasingly) from left to right. This procedure creates a
bijection between $\T_\l$ and  $\P^\l(1)$

\begin{example}
The following are the $14=C_4$ pairs of rooted trees with 3 edges.

\bigskip
 \psset{radius=1mm}
 \pstree[levelsep=0.5cm]{\Tcircle{}} \
{\pstree[levelsep=0.5cm]{\Tcircle{} }{ \pstree{ \Tcircle{} }{
\Tcircle{} \Tcircle{}}} }
 \qquad
 \pstree[levelsep=0.5cm]{\Tcircle{}
}\ {\pstree[levelsep=0.5cm]{\Tcircle{} }{ \pstree{ \Tcircle{} }{
\Tcircle{}}\Tcircle{}}}
 \qquad \pstree[levelsep=0.5cm]{\Tcircle{}
}\ {\pstree[levelsep=0.5cm]{\Tcircle{} }{
         \Tcircle{}
        \pstree{ \Tcircle{} }{
                     \Tcircle{}}}}
\qquad \pstree[levelsep=0.5cm]{\Tcircle{} } \ \
{\pstree[levelsep=0.5cm]{\Tcircle{} }{
        \pstree{ \Tcircle{} }{
        \pstree{ \Tcircle{} }{
                     \Tcircle{}
                              }
                    }}}
\qquad \pstree[levelsep=0.5cm]{\Tcircle{} }\
{\pstree[levelsep=0.5cm]{\Tcircle{} }{\Tcircle{}
        \Tcircle{}\Tcircle{}}}
 \newline \bigskip

{\pstree[levelsep=0.5cm]{\Tcircle{} }{
        \pstree{ \Tcircle{} }{
        \pstree{ \Tcircle{} }{
                     \Tcircle{}
                              }
                    }}}\ \pstree[levelsep=0.5cm]{\Tcircle{} } \qquad\qquad
 \pstree[levelsep=0.5cm]{\Tcircle{} }{
        \pstree{ \Tcircle{} }{
                     \Tcircle{}\Tcircle{}
                              }}\
        \pstree[levelsep=0.5cm]{\Tcircle{}}
 \qquad\qquad \pstree[levelsep=0.5cm]{\Tcircle{} }{
        \pstree{ \Tcircle{} }{
                     \Tcircle{}
                              }}\
        \pstree[levelsep=0.5cm]{ \Tcircle{} }{\Tcircle{}}
 \qquad \pstree[levelsep=0.5cm]{ \Tcircle{} }{\Tcircle{}}\
 \pstree[levelsep=0.5cm]{\Tcircle{} }{
        \pstree{ \Tcircle{} }{
                     \Tcircle{}
                              }}
 \qquad \pstree[levelsep=0.5cm]{\Tcircle{} }{
         \Tcircle{}}\
        \pstree[levelsep=0.5cm]{ \Tcircle{} }{
                     \Tcircle{}\Tcircle{}
                              }
\qquad \pstree[levelsep=0.5cm]{\Tcircle{} }{\pstree{ \Tcircle{}
}{
                     \Tcircle{}
                              }
        \Tcircle{}}\ \pstree[levelsep=0.5cm]{\Tcircle{}}

\newline\bigskip

 {\pstree[levelsep=0.5cm]{\Tcircle{} }{
         \Tcircle{}
        \pstree{ \Tcircle{} }{
                     \Tcircle{}}}}\ \pstree[levelsep=0.5cm]{\Tcircle{} }
\quad\qquad \pstree[levelsep=0.5cm]{ \Tcircle{} }{
                     \Tcircle{}\Tcircle{}
                              }\  \pstree[levelsep=0.5cm]{\Tcircle{} }{
         \Tcircle{}}
\qquad {\pstree[levelsep=0.5cm]{\Tcircle{} }{\Tcircle{}
        \Tcircle{}\Tcircle{}}}\ \pstree[levelsep=0.5cm]{\Tcircle{} }

\bigskip
The following are the labeling of the nodes following the
algorithm described before, and the corresponding partitions.
These are in fact the 14 partitions in $\P^4(1).$
\bigskip

 \pstree[levelsep=1cm]{\Tcircle{$2^-$}} \
{\pstree[levelsep=1cm]{\Tcircle{$2^+$} }{ \pstree{ \Tcircle{1} }{
\Tcircle{4} \Tcircle{3}}} }
 $\mu=(2,2,1,1)$\qquad\qquad
 \pstree[levelsep=1cm]{\Tcircle{$3^-$}
}\ {\pstree[levelsep=1cm]{\Tcircle{$3^+$} }{ \pstree{ \Tcircle{1} }{
\Tcircle{4}}\Tcircle{2}}}$\mu=(3,3,3,1)$
 \\ \pstree[levelsep=1cm]{\Tcircle{$3^-$}
}\ {\pstree[levelsep=1cm]{\Tcircle{$3^+$} }{
         \Tcircle{1}
        \pstree{ \Tcircle{2} }{
                     \Tcircle{4}}}}$\mu=(3,3,3,2)$
\qquad\qquad \pstree[levelsep=1cm]{\Tcircle{$3^-$} }\
{\pstree[levelsep=1cm]{\Tcircle{$3^+$} }{
        \pstree{ \Tcircle{2} }{
        \pstree{ \Tcircle{4} }{
                     \Tcircle{1}
                              }
                    }}}\ $\mu=(4,3,3,2)$\\
\qquad  \pstree[levelsep=1cm]{\Tcircle{$4^-$} }\
{\pstree[levelsep=1cm]{\Tcircle{$4^+$} }{\Tcircle{1}
        \Tcircle{2}\Tcircle{3}}}\ $\mu=(4,4,4,4)$
\qquad\qquad
{\pstree[levelsep=1cm]{\Tcircle{$2^-$} }{
        \pstree{ \Tcircle{3} }{
        \pstree{ \Tcircle{1} }{
                     \Tcircle{4}
                              }
                    }}}\ \pstree[levelsep=1cm]{\Tcircle{$2^+$} }\ \quad
                    $\mu=(3,2,2,1)$\
\\\\
 \pstree[levelsep=1cm]{\Tcircle{$3^-$} }{
        \pstree{ \Tcircle{4} }{
                     \Tcircle{1}\Tcircle{2}
                              }}\
        \pstree[levelsep=1cm]{\Tcircle{$3^+$}}\ \quad $\mu=(4,4,3,3)$
 \qquad\qquad \pstree[levelsep=1cm]{\Tcircle{$3^-$} }{
        \pstree{ \Tcircle{4} }{
                     \Tcircle{1}
                              }}\
        \pstree[levelsep=1cm]{ \Tcircle{$3^+$} }{\Tcircle{2}}\ \quad $\mu=(4,3,3,3)$
\\\\
 \qquad \pstree[levelsep=1cm]{ \Tcircle{$2^-$} }{\Tcircle{3}}\
 \pstree[levelsep=1cm]{\Tcircle{$2^+$} }{
        \pstree{ \Tcircle{1} }{
                     \Tcircle{4}
                              }}\ \quad $\mu=(2,2,2,1)$
 \qquad\qquad \pstree[levelsep=1cm]{\Tcircle{$3^-$} }{
         \Tcircle{4}}\
        \pstree[levelsep=1cm]{ \Tcircle{$3^+$} }{
                     \Tcircle{1}\Tcircle{2}
                              }\ \quad $\mu=(3,3,3,3)$\
\\\\
\qquad \pstree[levelsep=1cm]{\Tcircle{$2^-$} }{\pstree{
\Tcircle{4} }{
                     \Tcircle{1}
                              }
        \Tcircle{3}}\ \pstree[levelsep=1cm]{\Tcircle{$2^+$}} \ \quad $\mu=(4,2,2,2)$\ \qquad\qquad
 {\pstree[levelsep=1cm]{\Tcircle{$2^-$} }{
         \Tcircle{4}
        \pstree{ \Tcircle{3} }{
                     \Tcircle{1}}}}\ \pstree[levelsep=1cm]{\Tcircle{$2^+$} }\ \quad $\mu=(3,2,2,2)$ 
                    \qquad\qquad
 \pstree[levelsep=1cm]{ \Tcircle{$2^-$} }{
                     \Tcircle{4}\Tcircle{3}
                              }\  \pstree[levelsep=1cm]{\Tcircle{$2^+$} }{
         \Tcircle{1}}\ \quad $\mu=(2,2,2,2)$
\qquad {\pstree[levelsep=1cm]{\Tcircle{$1^-$} }{\Tcircle{4}
        \Tcircle{3}\Tcircle{2}}}\ \pstree[levelsep=1cm]{\Tcircle{$1^+$} } \ \quad $\mu=(1,1,1,1)$
\end{example}

\bigskip
Now we want to describe the action of $\tau:\P^\l(1)\to\P^\l(1)$.
$\tau$ replaces the label $i$ by $\l+1-i$, except $b^-$ and $b^+$
which are replaced by $(\l+1-b)^+$ and $(\l+1-b)^-$, respectively.
The two trees exchange positions and the levels that contained the
increasing sequence $1,2,3,\dots$ now contain the decreasing
sequence $\l,\l-1,\l-2,\dots$ and viceversa.

\begin{example} Consider the partition in Example \ref{ex14}:
$$\mu=(11,11,11,11,10,9,9,9,9,9,9,7,7,3)$$
$$\tau(\mu)=(12,8,8,6,6,6,6,6,6,5,4,4,4,4)$$
Their corresponding pairs of trees are shown in Figure
\ref{extau}.
\end{example}
\begin{figure}[h]
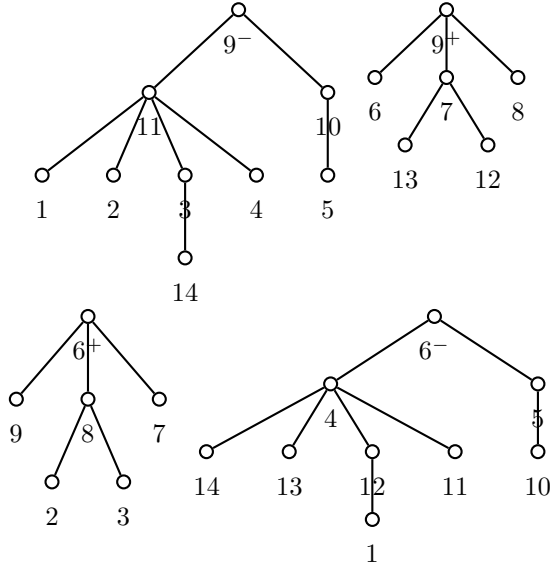

\begin{center}
\psset{radius=1mm} \pstree[levelsep=1.1cm]{\TC~{$9^-$}}{
\pstree{\TC~{11}}{\TC~{1}\TC~{2}\pstree{\TC~{3}}{\TC~{14}}\TC~{4}}
\pstree{\TC~{10}}{\TC~{5}}
 }\quad
 \pstree[levelsep=0.9cm]{\TC~{$9^+$}}{\TC~{6}\pstree{\TC~{7}}{\TC~{13}\TC~{12}}\TC~{8}}
\end{center}
\begin{center}\psset{radius=1mm}\pstree[levelsep=1.1cm]{\TC~{$6^+$}}{\TC~{9}
\pstree{\TC~{8}}{\TC~{2}\TC~{3}}\TC~{7}}\quad
 \pstree[levelsep=0.9cm]{\TC~{$6^-$}}{
\pstree{\TC~{4}}{\TC~{14}\TC~{13}\pstree{\TC~{12}}{\TC~{1}}\TC~{11}}
\pstree{\TC~{5}}{\TC~{10}}
 }
\end{center}
\caption{The action of $\tau$}\label{extau}
\end{figure}

\subsection{$\boldsymbol{\P^\l(\Omega_m)}$ and forests}

Now we define a bijection between $\P^\l(\Omega_m)$ and a family
enumerated by ballot numbers.

Now we describe how to associate to $\mu\in\P^\l(\Omega_m)$ a
$m$-tuple of (possibly empty) pairs of rooted trees.  For
$\mu\in\P^\l(\Omega_m)$, let
$L=\{i\,|\,\tmu(i)>i\},\,M=\{i\,|\,\tmu(i)=i\},\,H=\{i\,|\,\tmu(i)<i\}$.
We claim that $L,\,M$ and $H$ are intervals. Assume that $i\in H$
and $j>i$. From Lemma \ref{propt} (a) we have that
$i>\tmu(i)=t(i,\mu(i))$ implies $i>\mu(i)$ and therefore
$j>i>\mu(i)\geq\mu(j)$ and therefore $\tmu(j)=\mu(j)$ and $j\in
H$. Similarly, $i\in L$ and $j<i$ imply $j\in L.$ This proves that
$L,\,M$ and $H$ are intervals.

The roots of the pairs of trees are going to be $\{b^-\,|\,b\in
M\}\cup \{b^+\,|\,b\in M\}$. The vertices that are not roots are
going to be $L\cup H.$ The edges are drawn according to the
following rules:
\begin{enumerate}
\item  If $i\in L\cup H$ and $\mu(i)\notin M$, we draw an edge $i\to\tmu(i)$.
\item If $i\in L$ and $\tmu(i)=b\in M$, we draw an edge $i\to b^+$.
\item If $i\in H$ and $\tmu(i)=b\in M$, we draw an edge $i\to b^-$.
\end{enumerate}
Now we pair the trees with roots $b^-,\,b^+\,(b\in M)$, and we say
that this pair is the $(m-\mu(b)+b)$-th one. Clearly $b\in M$
implies $m-\mu(b)+b\in[m]$ by the definition of $t$, and if $b<b'$
then $m-\mu(b)+b<m-\mu(b')+b'$, so different pairs have different
positions between 1 and $m$.

\begin{example}  Consider the following partition $\mu\in\P^{24}(\Omega_4)$:
\begin{center}
\begin{tabular}{|c||c|c|c|c|c|c|c|c|c|c|c|c|c|c|c|c|c|c|c|c|c|c|c|}\hline
$i$&1&2&3&4&5&6&7&8&9&10&11&12\\\hline
$\mu(i)$&22&22&22&21&21&21&21&20&17&17&17&16\\\hline
\end{tabular}
\begin{tabular}{|c||c|c|c|c|c|c|c|c|c|c|c|c|c|c|c|c|c|c|c|c|c|c|c|}\hline
$i$&13&14&15&16&17&18&19&20&21&22&23&24\\\hline
$\mu(i)$&16&15&15&15&14&14&13&12&6&3&3&3\\\hline
\end{tabular}
\end{center}
Here, $L=\{1,\dots,12\},\,M=\{13,14,15\},\,H=\{16,\dots,24\}$ and $\tmu:[24]\to[24]$ is
\begin{center}
\begin{tabular}{|c||c|c|c|c|c|c|c|c|c|c|c|c|c|c|c|c|c|c|c|c|c|c|c|}\hline
$i$&1&2&3&4&5&6&7&8&9&10&11&12\\\hline
$\tmu(i)$&19&19&19&18&18&18&18&17&14&14&14&13\\\hline
\end{tabular}
\begin{tabular}{|c||c|c|c|c|c|c|c|c|c|c|c|c|c|c|c|c|c|c|c|c|c|c|c|}\hline
$i$&13&14&15&16&17&18&19&20&21&22&23&24\\\hline
$\tmu(i)$&13&14&15&15&14&14&13&12&6&3&3&3\\\hline
\end{tabular}
\end{center}

This map determines 6 trees:

\bigskip

\psset{radius=1mm} \pstree[levelsep=1.2cm,
treesep=0.5cm]{\TC~{$13^-$}}{
\pstree{\TC~{19}}{\TC~{1}\TC~{2}\pstree{\TC~{3}}{\TC~{24}\TC~{23}\TC~{22}}}}
\pstree[levelsep=1.2cm,treesep=0.5cm]{\TC~{$13^+$}}{\TC~{12}}
\pstree[levelsep=1.2cm, treesep=0.5cm]{\TC~{$14^-$}}{
\pstree{\TC~{18}}{\TC~{4}\TC~{5}\pstree{\TC~{6}}{\TC~{21}}\TC~{7}}
\pstree{\TC~{17}}{\TC~{8}} } \pstree[levelsep=1.2cm,
treesep=0.5cm]{\TC~{$14^+$}}{\TC~{11}\TC~{10}\pstree{\TC~{9}}{\TC~{20}}}
\pstree[levelsep=1.2cm, treesep=0.5cm]{\TC~{$15^-$}}{\TC~{16}}
\TC~{$15^+$}

The pairs with  $13^\pm,\, 14^\pm$ and $15^\pm$ as roots are the
first, third and fourth, respectively, while the second pair is
empty. These 4 pairs of trees correspond to a forest with 4 trees
and total number of 24 edges:

\begin{center}
\psset{radius=1mm} $T_1=$\pstree[levelsep=0.9cm, treesep=0.4cm]{\TC}{
\pstree{\TC}{\TC\TC\pstree{\TC}{\TC\TC\TC}}
\pstree{\TC}{\TC}
 } $T_2=$\pstree[levelsep=0.9cm, treesep=0.4cm]\TC \ \quad
 $T_3=$\pstree[levelsep=0.9cm, treesep=0.4cm]{\TC}{
\pstree{\TC}{\TC\TC\pstree{\TC}{\TC}\TC}
\pstree{\TC}{\TC}\pstree{\TC}{\TC\TC\pstree{\TC}{\TC}}
 }
 $T_4=$\pstree[levelsep=0.9cm, treesep=0.4cm]{\TC}{\TC\TC}
\end{center}
\end{example}

Clearly the process can be reversed, and to every forest with $m$
trees and with total number of $\l$ edges we can associate an
element of $\P^\l(\Omega_m)$.

\end{document}